\providecommand{\U}[1]{\protect\rule{.1in}{.1in}}
\providecommand{\U}[1]{\protect\rule{.1in}{.1in}}
\providecommand{\U}[1]{\protect\rule{.1in}{.1in}}
\providecommand{\U}[1]{\protect\rule{.1in}{.1in}}
\newcounter{fig}
\numberwithin{equation}{section}
\DeclareMathOperator*{\colim}{colim}
\newcommand{\Z}{\mathbb{Z}}
\theoremstyle{plain}
\newtheorem{Theorem}{Theorem}[section]
\newtheorem{theorem}[Theorem]{Theorem}
\newtheorem{corollary}[Theorem]{Corollary}
\newtheorem{proposition}[Theorem]{Proposition}
\newtheorem{lemma}[Theorem]{Lemma}
\newtheorem{Alternative Version}{Alternative Version}
\theoremstyle{definition}
\newtheorem{definition}[Theorem]{Definition}
\newtheorem{Preliminaries}[Theorem]{}
\newtheorem{remark}[Theorem]{Remark}
\newtheorem{Remarks}[Theorem]{Remarks}
\def\Changed/{\ifvmode\else\vadjust{\vbox
to 0pt{\vskip -\baselineskip\hbox to 0pt{\hss\vrule height 0pt depth 1.2\baselineskip\hskip 1em}\vss}}\fi}
\def\Math#1{\def\MathString{#1}\futurelet\MathDelim\MathChoose}
\def\MathChoose{\ifmmode\let\MathDo\MathString  \else\let\MathDo\MathSkip\fi \MathDo}
\def\MathSkip{\ifx\MathDelim/\def\MathDo{$\MathString$\EatOne} \else\def\MathDo{$\MathString$}\fi\MathDo}
\def\Text#1{\def\TextString{#1}\futurelet\TextDelim\TextSkip}
\def\TextSkip{\ifx\TextDelim/\def\TextDo{\TextString\EatOne}\else\let\TextDo\TextString\fi\TextDo}
\def\EatOne#1{}
\def\SkipToEndScan#1\EndScan{}
\def\Scan#1#2#3{\ifx#1#2#3\expandafter\SkipToEndScan\fi\Scan#1}
\def\Upper#1{\Scan#1aAbBcCdDeEfFgGhHiIjJkKlLmMnNoOpPqQrRsStTuUvVwWxXyYzZ#1#1\EndScan}
\def\Phrase#1 #2/#3/#4=#5 #6/#7/#8.{\expandafter\edef\csname#2#3\endcsname{\noexpand\Text{#6#7}}
\expandafter\edef\csname\Upper#2#3\endcsname{\noexpand\Text{\Upper#6#7}}
\expandafter\edef\csname#1#2#3\endcsname{\noexpand\Text{#5 #6#7}}
\expandafter\edef\csname\Upper#1#2#3\endcsname{\noexpand\Text{\Upper#5 #6#7}}
\expandafter\edef\csname#2#4\endcsname{\noexpand\Text{#6#8}}
\expandafter\edef\csname\Upper#2#4\endcsname{\noexpand\Text{\Upper#6#8}}
}
\begin{document}
\title[Hermitian $K$-theory]{The homotopy fixed point theorem and the Quillen-Lichtenbaum conjecture in
Hermitian $K$-theory}
\author{A.\thinspace J. Berrick, M. Karoubi, M. Schlichting, P.\thinspace A.
{\O }stv{\ae }r}

\begin{abstract}
We settle two conjectures for computing higher Grothendieck-Witt groups (also
known as Hermitian $K$-groups) of noetherian schemes $X$,\textsf{ }under some
mild conditions. It is shown that the comparison map from the Hermitian
$K$-theory of $X$ to the homotopy fixed points of $K$-theory under the natural
$\mathbb{Z}/2$-action is a $2$-adic equivalence. We also prove that the mod
$2^{\nu}$ comparison map between the Hermitian $K$-theory of $X$ and its
\'{e}tale version is an isomorphism on homotopy groups in the same range as
for the Quillen-Lichtenbaum conjecture in $K$-theory. Applications compute
higher Grothendieck-Witt groups of complex algebraic varieties and rings of
$2$-integers in number fields, and hence values of Dedekind zeta-functions.

\end{abstract}
\maketitle

\section{Introduction}

In geometric applications, real topological $K$-groups often yield stronger
results than the more easily computable complex topological $K$-groups. This
is exemplified by Adams' solution of the vector field problem on spheres, and
the image of the $J$-homomorphism in the stable homotopy groups. One can,
however, compute the real topological $K$-groups by using the action of the
group $C_{2}$ of order $2$ on the groups and spaces underlying the complex
theory. More precisely, taking fixed points of the conjugation action yields
an inclusion $O=U^{C_{2}} \subset U$ of the orthogonal group into the unitary
group, and there is an induced homotopy equivalence
\begin{equation}
\label{eqn:ClassicalBOBU}BO \overset{\sim}{\to} BU^{hC_{2}}%
\end{equation}
between the classifying space of $O$ and the homotopy fixed points of $BU$.
It leads to the homotopy fixed point spectral sequence relating the complex
and real $K$-groups
\begin{equation}
\label{eqn:ClassicalOUSpseq}H^{-p}(C_{2};\pi_{q}(BU)) \Rightarrow\pi
_{p+q}(BO).
\end{equation}
\vspace{1ex}

The algebraic analogs of complex topological $K$-groups are Quillen's
algebraic $K$-groups $K_{i}(R) = \pi_{i} BGl(R)^{+}$ of a ring $R$, $i\geq1$.
In motivic lingo, the complex realization functor from $\mathbb{A}^{1}%
$-homotopy theory to the ordinary homotopy category sends algebraic $K$-theory
to complex topological $K$-theory. Similarly, the algebraic analogs of real
topological $K$-groups are the second author's Hermitian $K$-groups $GW_{i}(R)
= \pi_{i} BO(R)^{+}$, $i\geq1$. The Hermitian $K$-groups (also called
\emph{higher Grothendieck-Witt groups}) were introduced in
\cite{Karoubi:batelle} at the same time as algebraic $K$-theory and extended
to schemes in \cite{Sch10(myMV)}. Hermitian $K$-theory yields real topological
$K$-theory via complex realization; see \cite{TripathiSchlichting}. As in
topology, higher Grothendieck-Witt groups often yield stronger results than
algebraic $K$-groups in applications, as exemplified by recent work on
projective modules over smooth affine algebras \cite{AsokFasel},
\cite{FaselRaoSwan}. Another major motivation for computing Hermitian
$K$-groups is the current quest for understanding motivic stable homotopy
groups, a fundamental problem drawing inspiration from topology. \vspace{1ex}

From the homological point of view, Borel's work
\cite{Borel} put on the same footing the general linear group and other
classical groups like the orthogonal or symplectic ones. Rationally, or more
generally up to $2$-torsion, the groups $GW_{i}(R)$, which deal with the
orthogonal and symplectic groups, are well understood thanks to the
\textquotedblleft fundamental theorem of Hermitian $K$%
-theory\textquotedblright\ \cite{Kar1980}, \cite{Sch(myderived)}: they can be
computed in terms of Witt groups and the symmetric part of Quillen's
$K$-theory under the involution induced by the duality functor; see Remark
\ref{rem:oddprime}. On the other hand, much less is known about the
$2$-torsion in $GW_{i}(R)$, and our article provides new tools for computing
these.
\vspace{1ex}

In this paper, under some mild assumptions, we settle two conjectures for
computing Hermitian $K$-groups of commutative rings and more generally of
schemes.\footnote{The results of this paper were found independently by the
third author in the general case \cite{Sch11} and the other authors in the
case of schemes of characteristic 0.} The first conjecture is the algebraic
analog of the homotopy equivalence (\ref{eqn:ClassicalBOBU}) and was
formulated by Thomason in \cite{Thomasonhtylimit} as a homotopy limit problem.
It was explicitly conjectured by Williams \cite[p.\thinspace667]{Wil05} in
relation with geometric topology. \vspace{1ex}

We prove Williams' conjecture in Theorem 2.4. For commutative rings in which
$-1$ is a sum of squares and mild finiteness conditions hold, we obtain a
homotopy equivalence
\begin{equation}
\label{eqn:OrthGpVersionHFT}BO(R)^{+} \sim\left( B\mathrm{Gl}(R)^{+}\right)
^{hC_{2}}%
\end{equation}
valid up to connected components, and an associated spectral sequence
\begin{equation}
\label{eqn:algSpSeqGlO}H^{-p}(C_{2};K_{q}(R)) \Rightarrow GW_{p+q}(R).
\end{equation}
\label{thm:IntegralHtpyLimit} When $-1$ is not a sum of squares in $R$, the
homotopy equivalence (\ref{eqn:OrthGpVersionHFT}) is not valid though its
$2$-adic version still holds (Theorem 2.2). In fact, we prove our results for
schemes - a generality imposed upon us by our use of Nisnevich descent.

Theorems \ref{thm:Z2htpylimit} and \ref{thm:IntegralHtpyLimit} below solve
Williams' conjecture for noetherian schemes.
Our proof uses among other things a result of Hu-Kriz-Ormsby \cite{HKO} which
in turn relies on the solution of Milnor's conjecture by Voevodsky \cite{VV}.
From our solution of Williams' conjecture, we prove general theorems for
higher Grothendieck-Witt groups from their $K$-theory counterparts. As an
example of application, we give a conceptual computation of the Hermitian
$K$-groups of rings of $2$-integers in certain totally real number fields
\cite{BKO1} and relate their orders to values of Dedekind zeta-functions; see
Theorems \ref{thm:ComputationOF} and \ref{thm:zetavalues}. \vspace{1ex}

Our innovation in the proof of Williams' conjecture is the simultaneous proof
of another conjecture: the counterpart, for Hermitian $K$-theory, of the
Quillen-Lichtenbaum conjecture in $K$-theory.
The main goal is to compare the higher Grothendieck-Witt groups with mod
$2^{\nu}$ coefficients to their \'{e}tale analogs. In Theorem \ref{thm:QL}, we
show that the \'{e}tale comparison map for Hermitian $K$-theory is an
isomorphism on homotopy groups in the same range and under the same hypotheses
as it is for $K$-theory. The Hermitian Quillen-Lichtenbaum conjecture was
first explored in \cite{BKO}, where the \'{e}tale comparison map was shown to
be split surjective, and conjectured to be bijective, in sufficiently high
degrees. \vspace{1ex}

Since the work of Artin and Grothendieck, it is well known that for complex
algebraic varieties, \'{e}tale homotopy (with finite coefficients) coincides
with the topological analog. Therefore, our results enable us to compute
higher Grothendieck-Witt groups of complex algebraic varieties in terms of
topological data.
We also give similar computations for totally imaginary number fields in terms
of \'{e}tale cohomology. See Theorems \ref{thm:GWalgKOtop} and
\ref{thm:2cd2computation} for precise statements.

\section{Statement of results}

\label{section:Statementofresults}

Here is a more detailed description of the results in this paper. Most of our
arguments take place in the setting of spectra associated to what we shall
call a \emph{QL} scheme (in honor of Quillen and Lichtenbaum). We let
$\mathrm{vcd}_{2}(X)$ be shorthand for $\sup\{\mathrm{vcd}_{2}(k(x))\mid x\in
X\}$, where, for any field $k$, the virtual mod-$2$ cohomological dimension
$\mathrm{vcd}_{2}(k)$ is the mod-$2$ \'etale cohomological dimension of
$k(\sqrt{-1})$.

\begin{definition}
\label{defn: QL}We call a scheme $X$ a \emph{QL}\textrm{ }scheme if it is
noetherian of finite Krull dimension, $\frac{1}{2}\in\Gamma(X,\mathcal{O}%
_{X})$, $\mathrm{vcd}_{2}(X)<\infty$ and $X$ has an ample family of line
bundles.\medskip
\end{definition}

Note that $\mathrm{vcd}_{2}(X)<\infty$ if $X$ is of finite type over
$\mathbb{Z}[\frac{1}{2}]$ or $X=\mathrm{Spec}(k)$, where $k$ is a field for
which $\mathrm{vcd}_{2}(k)<\infty$. The ampleness condition means that $X$ is
a finite union of open affine subsets of the form $\{f_{i}\neq0\}$ with
$f_{i}$ a section of a line bundle $\mathcal{L}_{i}$ on $X$. Examples include
all affine schemes, separated regular noetherian schemes, and quasi-projective
schemes over a scheme with an ample family of line bundles. Every $QL$ scheme
$X$ is quasi-separated because the underlying topological space of $X$ is
noetherian. We are relieved of any regularity conditions in Definition
\ref{defn: QL} because the descent results in \cite{Sch(myderived)},
\cite{TT90}, and the use of Gabber rigidity for $K$-theory and
Grothendieck-Witt theory, see Theorem \ref{rigidity}, apply to $QL$ schemes.

For a fixed line bundle $\mathcal{L}$ on $X$, let $GW^{\left[  n\right]
}(X,\mathcal{L})$ denote the Grothendieck-Witt spectrum of $X$ with
coefficients in the $n\,$th shifted chain complex $\mathcal{L}[n]$. This is
the Grothendieck-Witt spectrum of the category of bounded chain complexes of
vector bundles over $X$ equipped with the duality functor $E\mapsto
\mathrm{Hom}(E,\mathcal{L}[n])$ and quasi-isomorphisms as the weak
equivalences \cite[\S \ 8]{Sch10(myMV)}. If $X=\mathrm{Spec}(R)$ is affine,
$n=0$ or $2$ and $\mathcal{L}=\mathcal{O}_{X}$, its nonnegative homotopy
groups coincide with Karoubi's Hermitian $K$-groups of $R$ \cite{Kar1980},
with the sign of symmetry $\varepsilon=\pm1$. If $n=1$ or $3$, we recover the
so-called $U$-groups \cite{Kar1980}. For $GW^{\left[  n\right]  }%
(X,\mathcal{L})$, we employ the delooping constructed in \cite[Theorem 5.5 and
Proposition 5.6]{Sch(myderived)}, whose $i$\thinspace th homotopy group
$GW_{i}^{\left[  n\right]  }(X,\mathcal{L})$ is naturally isomorphic to
Balmer's triangular Witt group $W^{n-i}(X,\mathcal{L})$ when $i<0$; see
\cite{Balmer}, \cite[Proposition 6.3]{Sch(myderived)}.

We write $K^{[n]}(X,\mathcal{L})$ for the connective $K$-theory spectrum
$K(X)$ of $X$ equipped with the $C_{2}=\mathbb{Z}/2$-action induced by the
duality functor $\mathrm{Hom}(-,\,\mathcal{L}[n])$. Recall from \cite{Kob99},
\cite[\S \ 7.2]{Sch(myderived)} the natural map
\begin{equation}
GW^{\left[  n\right]  }(X,\mathcal{L})\longrightarrow K^{[n]}(X,\mathcal{L}%
)^{hC_{2}} \label{WilliamsMap}%
\end{equation}
between Hermitian $K$-theory and the homotopy fixed points of $K$-theory.

Throughout the paper we use the term \textquotedblleft
equivalence\textquotedblright\ as shorthand for a \textquotedblleft map that
induces isomorphisms on all homotopy groups.\textquotedblright

\begin{theorem}
[Homotopy Fixed Point Theorem]\label{thm:Z2htpylimit} Let $X$ be a QL scheme
as in Definition \ref{defn: QL}\textsf{ }above. Then for all $\nu\geq1$, the
map (\ref{WilliamsMap}) induces an equivalence of spectra mod $2^{\nu}$:
\[
GW^{\left[  n\right]  }(X,\mathcal{L};\,\mathbb{Z}/2^{\nu})\overset{\simeq
}{\longrightarrow}K^{[n]}(X,\mathcal{L};\,\mathbb{Z}/2^{\nu})^{hC_{2}}.
\]

\end{theorem}

\begin{remark}
Williams conjectured this theorem in \cite[p.\thinspace627]{Wil05} for affine
$X$ (and noncommutative rings), but with no restriction on the cohomological
dimension. In that generality, however, there are counterexamples; see
\cite{HKO} for fields of infinite virtual mod-$2$ cohomological dimension and
\cite[Appendix C]{BKO1} for noncommutative rings.
\end{remark}

Most of the results of this paper deal with $2$-primary coefficients. For
$p$-primary coefficients with $p$ an odd prime see Remark \ref{rem:oddprime}
below. One exception is the following result, which Proposition
\ref{prop:IntegralConverse} shows is the best that we may expect integrally.

\begin{theorem}
[Integral Homotopy Fixed Point Theorem]\label{thm:IntegralHtpyLimit} Let $X$
be a QL scheme. If $-1$ is a sum of squares in all residue fields of $X$, then
the map (\ref{WilliamsMap}) is an equivalence of spectra
\[
GW^{\left[  n\right]  }(X,\mathcal{L})\overset{\simeq}{\longrightarrow}%
K^{[n]}(X,\mathcal{L})^{hC_{2}}.
\]

\end{theorem}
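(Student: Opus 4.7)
The plan is to bootstrap from the mod-$2^\nu$ equivalence of Theorem~\ref{thm:Z2htpylimit} to an integral equivalence by showing that the residual obstruction after inverting $2$ is purely Witt-theoretic, and vanishes precisely under the sum-of-squares hypothesis. Let $C$ denote the homotopy cofiber of~(\ref{WilliamsMap}). Theorem~\ref{thm:Z2htpylimit} supplies $C/2^\nu\simeq 0$ for every $\nu\geq 1$, so multiplication by $2$ acts invertibly on $\pi_\ast C$; equivalently $\pi_\ast C$ is a $\Z[\tfrac{1}{2}]$-module and $C\simeq C[\tfrac{1}{2}]$. It thus suffices to prove that~(\ref{WilliamsMap}) becomes an equivalence after inverting $2$.

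I would next simplify the right-hand side: for any $C_2$-spectrum $E$, the Tate construction $E^{tC_2}$ is $2$-power torsion, and the $C_2$-action splits $E[\tfrac{1}{2}]$ into $\pm 1$-eigensummands, whence $E^{hC_2}[\tfrac{1}{2}]$ is naturally the $+1$-summand. Applied to $K^{[n]}(X,\mathcal{L})$ this identifies the target of~(\ref{WilliamsMap}) with the duality-invariant summand of $K(X)[\tfrac{1}{2}]$. On the source side, by the Karoubi--Schlichting fundamental theorem \cite{Sch(myderived)}, after inverting $2$ the averaging idempotents built from the hyperbolic and forgetful functors should produce a splitting
\[
GW^{[n]}(X,\mathcal{L})[\tfrac{1}{2}] \;\simeq\; K^{[n]}(X,\mathcal{L})^{hC_2}[\tfrac{1}{2}] \;\oplus\; \mathcal{W}^{[n]}(X,\mathcal{L})[\tfrac{1}{2}],
\]
where the second summand is a spectrum whose homotopy groups are Balmer's triangular Witt groups $W^{n-\ast}(X,\mathcal{L})$ \cite{Balmer}. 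Under this splitting, the comparison map~(\ref{WilliamsMap}) is projection onto the first factor, so the integral theorem reduces to showing $W^\ast(X,\mathcal{L})[\tfrac{1}{2}]=0$.

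For the vanishing I would argue first pointwise and then globally. If $-1=a_1^2+\cdots+a_r^2$ in a residue field $k=k(x)$, then a sufficiently long Pfister form $\langle\!\langle 1,\ldots,1\rangle\!\rangle$ is hyperbolic, forcing $W(k)$ to be $2$-primary torsion; the same holds for triangular Witt groups with any line-bundle twist. A Gersten / Zariski--Nisnevich descent spectral sequence for triangular Witt groups -- available in the Balmer--Walter/Schlichting framework given the ample-line-bundle and $\vcd_2(X)<\infty$ hypotheses -- then propagates the vanishing from residue fields to $X$, giving $W^\ast(X,\mathcal{L})[\tfrac{1}{2}]=0$ and hence $\mathcal{W}^{[n]}(X,\mathcal{L})[\tfrac{1}{2}]\simeq 0$, as required.

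The main obstacle I foresee is the splitting in the second paragraph: establishing it at the spectrum level with $\Z[\tfrac{1}{2}]$-coefficients and correctly identifying the complementary summand with Balmer's triangular Witt spectrum for arbitrary shift $n$ and twist $\mathcal{L}$, together with running the descent spectral sequence for Witt groups on possibly singular schemes equipped only with an ample family of line bundles. Both ingredients are structural consequences of Schlichting's derived Grothendieck--Witt machinery, but must be invoked with care; once they are in place, the rest of the argument is a formal consequence of Theorem~\ref{thm:Z2htpylimit} and the classical fact that the Witt ring of a non-formally-real field is $2$-primary torsion.
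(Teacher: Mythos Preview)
Your outline is sound and its core is the same as the paper's: deduce from Theorem~\ref{thm:Z2htpylimit} that multiplication by $2$ is invertible on the homotopy of the fibre, and then use the sum-of-squares hypothesis to show that the fibre is simultaneously $2$-power torsion. But you have made both remaining steps harder than necessary, and one of them is genuinely shaky as written.

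For the splitting, you do not need averaging idempotents from hyperbolic/forgetful functors. The homotopy cartesian square in \ref{subsec:Prelim}(\ref{item:Fperiodic}) already identifies the fibre of~(\ref{WilliamsMap}) with the fibre of $L^{[n]}(X,\mathcal{L})\to\hat{H}(C_2,K^{[n]}(X,\mathcal{L}))$. Since Tate of a $C_2$-spectrum is $2$-power torsion, after inverting $2$ the right-hand map has contractible target, and the fibre of~(\ref{WilliamsMap}) after inverting $2$ is precisely $L^{[n]}(X,\mathcal{L})[\tfrac12]$, whose homotopy groups are the Balmer Witt groups $W^{n-\ast}(X,\mathcal{L})[\tfrac12]$. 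No spectrum-level splitting needs to be constructed by hand.

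For the Witt vanishing, your proposed Gersten/Nisnevich-descent argument is the weak point: Gersten-type resolutions for Witt groups require regularity, and the theorem is stated for possibly singular $X$. The paper bypasses this entirely. Knebusch's theorem (recorded here as Lemma~\ref{W2tors}) says that if $-1$ is a sum of squares in every residue field, then already $2^m W^0(X)=0$ for some $m$; no passage through stalks is needed. Since $\pi_\ast L^{[n]}(X,\mathcal{L})$ carries a natural $W^0(X)$-module structure \cite{WW00},\cite{Sch(myderived)}, all the twisted and shifted Witt groups are killed by $2^m$ as well, hence $L^{[n]}(X,\mathcal{L})[\tfrac12]\simeq\ast$. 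This is exactly the paper's argument: the fibre is at once annihilated by $2^m$ (via the $W^0(X)$-action) and has $2^m$ acting invertibly (via Theorem~\ref{thm:Z2htpylimit}), so it is contractible.
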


For example, the map (\ref{WilliamsMap}) is an equivalence when $X$ is of
finite type over the Gaussian $2$-integers $\mathbb{Z}[\frac{1}{2},\sqrt{-1}]$
or when $X$ is defined over a field that is not formally real, \textsl{e.g.},
an algebraically closed field of characteristic $\neq2$ or a field
of\textsf{\ }odd characteristic. If $\mathcal{L}=\mathcal{O}_{X}$ then the
converse holds; see Proposition \ref{prop:IntegralConverse}. For example, the
map (\ref{WilliamsMap}) is not an integral equivalence for $X=\text{Spec}(R)$
where $R=\mathbb{Z}[\frac{1}{2}]$, $\mathbb{Q}$ or $\mathbb{R}$.

\medskip

Recall from \cite[Definition 7.1]{Sch(myderived)} (for affine $X$; see also
\cite{Ranicki:book}) the $L$-theory spectrum $L(X,\mathcal{L})$ of a
$\mathbb{Z}[\frac{1}{2}]$-scheme $X$ with coefficients in the line-bundle
$\mathcal{L}$. By \cite[Proposition 7.2]{Sch(myderived)}, its homotopy groups
$\pi_{i}L(X,\mathcal{L})$ are naturally isomorphic to the higher Witt-groups
$W^{-i}(X,\mathcal{L})$ of Balmer \cite{Balmer}. Further, denote by
$\widehat{H}(C_{2},F)$ the Tate-spectrum of a spectrum $F$ with $C_{2}$-action.

\begin{corollary}
\label{cor:LthTate} For any QL scheme $X$, the map%
\[
L(X,\mathcal{L})\longrightarrow\widehat{H}(C_{2},\,K(X,\mathcal{L}))
\]
is a $2$-adic equivalence. It is an integral equivalence under the further
hypothesis of Theorem \ref{thm:IntegralHtpyLimit}.
\end{corollary}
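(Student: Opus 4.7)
The plan is to deduce the corollary formally from Theorems \ref{thm:Z2htpylimit} and \ref{thm:IntegralHtpyLimit} by embedding the Williams map (\ref{WilliamsMap}) into a morphism of cofiber sequences whose common leftmost term is the homotopy orbit spectrum $K(X,\mathcal{L})_{hC_2}$. Concretely, I would identify $L$-theory as the cofiber of the hyperbolic map, and the Tate spectrum as the cofiber of the norm map, and then observe that (\ref{WilliamsMap}) intertwines the two in a compatible way.

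The first ingredient is Schlichting's fundamental fibration of hermitian $K$-theory (see \cite{Sch(myderived)}), the cofiber sequence
\[
K(X,\mathcal{L})_{hC_2}\xrightarrow{\,\mathrm{hyp}\,} GW^{[n]}(X,\mathcal{L})\longrightarrow L(X,\mathcal{L})
\]
whose first map is induced by the hyperbolic functor; under the identification $\pi_{i}L(X,\mathcal{L})\cong W^{-i}(X,\mathcal{L})$ recalled just before the statement, this presents $L$-theory (with an implicit shift accounting for $n$) as the cofiber of the hyperbolic map. The second ingredient is the defining cofiber sequence
\[
K(X,\mathcal{L})_{hC_2}\xrightarrow{\,\mathrm{Nm}\,} K^{[n]}(X,\mathcal{L})^{hC_2}\longrightarrow \widehat{H}(C_{2},\,K(X,\mathcal{L}))
\]
for the Tate spectrum, which is available without any hypothesis on $X$.

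The key compatibility is that the composition of (\ref{WilliamsMap}) with the hyperbolic map equals the norm map, up to preferred homotopy. This is built into the construction of (\ref{WilliamsMap}) in \cite{Kob99} and \cite{Sch(myderived)}, and yields a commutative square, hence a morphism of cofiber sequences whose left vertical arrow is the identity on $K(X,\mathcal{L})_{hC_{2}}$, middle arrow is (\ref{WilliamsMap}), and right vertical arrow is the map of the corollary.

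To conclude, I would apply Theorem \ref{thm:Z2htpylimit}: the middle arrow is a mod-$2^{\nu}$ equivalence for every $\nu\ge 1$, hence a $2$-adic equivalence; the five lemma applied to the long exact sequences of homotopy groups then forces the right vertical arrow to be a $2$-adic equivalence as well. Under the extra hypothesis of Theorem \ref{thm:IntegralHtpyLimit}, the middle arrow is an integral equivalence and so is the induced map on cofibers. The only nontrivial step is the compatibility of the Williams map with the fundamental fibration and the norm sequence; this is the conceptual content of the corollary and is the step I expect to require the most care, but it is already present in Schlichting's framework through the factorization of the hyperbolic map through the norm via (\ref{WilliamsMap}).
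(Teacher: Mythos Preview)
Your proposal is correct and is essentially the same argument as the paper's: the paper simply invokes the homotopy cartesian square \ref{subsec:Prelim}(2), which is precisely the statement that the Williams map (\ref{WilliamsMap}) and the map $L\to\widehat{H}(C_{2},K)$ have equivalent homotopy fibres, both identified with $K_{hC_{2}}$ via the fibration \ref{subsec:Prelim}(1) and the norm sequence. Your morphism-of-cofiber-sequences formulation and the paper's homotopy-cartesian-square formulation are two packagings of the same fact, and the compatibility you flag as the ``only nontrivial step'' is exactly what \ref{subsec:Prelim}(2) records.
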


In the formulation of the next theorem we employ the \textquotedblleft
non-connective\textquotedblright\ versions $\mathbb{G}W$ of $GW$
\cite[p.\thinspace430, Definition 8]{Sch10(myMV)}, \cite[Definition 8.6 and
Remark 8.8]{Sch(myderived)} and $\mathbb{K}$ of $K$ \cite[p.\thinspace360,
Definition 6.4]{TT90}, \cite[p.\thinspace123, Definition 12.1]{Sch06(myMathZ)}%
. We note the following consequence of our previous results.

\begin{corollary}
\label{rem:ConnectedVsNonconnected} Theorems \ref{thm:Z2htpylimit} and
\ref{thm:IntegralHtpyLimit} remain valid if one replaces $GW$ and $K$ with
$\mathbb{G}W$ and $\mathbb{K}$, respectively.
\end{corollary}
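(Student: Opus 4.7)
The plan is to deduce the non-connective statement from the connective one (Theorems~\ref{thm:Z2htpylimit} and~\ref{thm:IntegralHtpyLimit}), together with Corollary~\ref{cor:LthTate}, by comparing both sides of the non-connective comparison map through a Karoubi-type fundamental fibration and the Tate fiber sequence.

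First, I would invoke the Karoubi fundamental fibration of non-connective spectra
\[
\mathbb{K}(X,\mathcal{L})_{hC_2} \longrightarrow \mathbb{G}W^{[n]}(X,\mathcal{L}) \longrightarrow L^{[n]}(X,\mathcal{L})
\]
together with the standard Tate fiber sequence
\[
\mathbb{K}(X,\mathcal{L})_{hC_2} \longrightarrow \mathbb{K}(X,\mathcal{L})^{hC_2} \longrightarrow \widehat{H}\bigl(C_2,\,\mathbb{K}(X,\mathcal{L})\bigr).
\]
The non-connective comparison $\mathbb{G}W^{[n]}(X,\mathcal{L}) \to \mathbb{K}(X,\mathcal{L})^{hC_2}$ naturally extends the connective map~(\ref{WilliamsMap}) and induces the identity on the common fiber $\mathbb{K}_{hC_2}$. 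It is therefore an equivalence (mod~$2^\nu$, respectively integrally) if and only if the induced map on cofibers, $L^{[n]}(X,\mathcal{L}) \to \widehat{H}(C_2,\mathbb{K}(X,\mathcal{L}))$, has the same property.

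Next I would reduce the target from $\mathbb{K}$ to $K$ inside the Tate construction. Since $K \to \mathbb{K}$ is an isomorphism on non-negative homotopy groups and the Bass--Thomason delooping is built inductively from connective $K$-theory of localizations of $X$ (all of which continue to satisfy the hypotheses of our theorems), a standard cofinality / connectivity argument in the homotopy orbit and homotopy fixed point spectral sequences shows that $\widehat{H}(C_2,K) \to \widehat{H}(C_2,\mathbb{K})$ is an equivalence with mod-$2^\nu$ coefficients (respectively integrally, under the stronger hypothesis). Combined with Corollary~\ref{cor:LthTate}, which states precisely that $L(X,\mathcal{L}) \to \widehat{H}(C_2,K(X,\mathcal{L}))$ is a $2$-adic (resp.\ integral) equivalence, this yields the desired conclusion.

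The main obstacle I anticipate is establishing the Karoubi fundamental fibration at the level of non-connective spectra with arbitrary line-bundle coefficients in the generality of the theorem; the essential input is the compatibility of Schlichting's non-connective delooping with the $C_2$-action induced by the duality $\mathrm{Hom}(-,\mathcal{L}[n])$, which I would extract from \cite{Sch(myderived)} and \cite{Sch10(myMV)}. Once that compatibility is in place, the remaining work is bookkeeping for the cofinality argument comparing $\widehat{H}(C_2,K)$ and $\widehat{H}(C_2,\mathbb{K})$, and the corollary follows formally.
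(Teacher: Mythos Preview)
The paper's proof is a one-line citation: the square
\[
\begin{array}{ccc}
GW(X) & \longrightarrow & \mathbb{G}W(X)\\
\downarrow & & \downarrow\\
K(X)^{hC_{2}} & \longrightarrow & \mathbb{K}(X)^{hC_{2}}
\end{array}
\]
is shown in \cite{Sch(myderived)} to be homotopy cartesian, so the vertical fibres coincide and the corollary follows immediately from Theorems~\ref{thm:Z2htpylimit} and~\ref{thm:IntegralHtpyLimit}. No passage through Corollary~\ref{cor:LthTate} or through the Tate spectrum of $\mathbb{K}$ is needed.

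Your route is longer, and the step you label as mere ``bookkeeping''---that $\widehat{H}(C_{2},K)\to\widehat{H}(C_{2},\mathbb{K})$ is an equivalence---is where the real content lies. The justification you offer does not work: writing $F$ for the fibre of $K\to\mathbb{K}$, one has $\pi_{i}F=0$ for $i\geq -1$, and while this forces $F^{hC_{2}}$ to be coconnective, the homotopy-orbit spectral sequence $H_{p}(C_{2};\pi_{q}F)\Rightarrow\pi_{p+q}(F_{hC_{2}})$ has nonzero terms with $p+q$ arbitrarily large (group homology of $C_{2}$ is nonzero in every degree), so there is no connectivity reason for $\widehat{H}(C_{2},F)$ to vanish. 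Nor does the inductive Bass--Thomason description help directly, since $(\phantom{K})^{hC_{2}}$ and $\widehat{H}(C_{2},-)$ do not commute with the sequential colimit in that construction. The Tate comparison you want is in fact true integrally, but establishing it is essentially equivalent to the cartesian square above; you have relocated the input from \cite{Sch(myderived)} rather than bypassed it.

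If you wish to argue without citing that square, a cleaner line is periodicity. Since $K\to\mathbb{K}$ is an equivalence on connective covers, the coconnectivity of $F^{hC_{2}}$ noted above shows the same for $K^{hC_{2}}\to\mathbb{K}^{hC_{2}}$; and $GW\to\mathbb{G}W$ is an equivalence on connective covers by construction. Hence the fibres of $GW\to K^{hC_{2}}$ and of $\mathbb{G}W\to\mathbb{K}^{hC_{2}}$ agree in non-negative degrees. Both fibres are $4$-periodic by the (connective and non-connective) versions of \ref{subsec:Prelim}~(\ref{item3:subsec:Prelim}), so they agree everywhere. This still requires the non-connective analogue of \ref{subsec:Prelim}~(\ref{item:Fperiodic}) from \cite{Sch(myderived)}, which is the substantive input in either approach.
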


We write $GW^{\left[  n\right]  }(X_{\mathrm{\acute{e}t}},\mathcal{L})$ for
the value at $X$ of a globally fibrant replacement of $GW^{\left[  n\right]
}(\phantom{X},\mathcal{L})$ on the small \'{e}tale site $X_{\mathrm{\acute
{e}t}}$ of $X$; see \cite{BKO}, \cite{Jardine:simplSpectra}, or
\ref{subsec:PshSpXet} below. Also, recall that a map is said to be
$m$-coconnected when its homotopy fiber is; equivalently, on the $i\,$th
homotopy groups the map induces an isomorphism whenever $i>m$ and a
monomorphism when $i=m$.

\begin{theorem}
[Hermitian Quillen-Lichtenbaum]\label{thm:QL} Let $X$ be a QL scheme. Then for
all $\nu\geq1$ the natural map
\begin{equation}
GW^{\left[  n\right]  }(X,\mathcal{L};\,\mathbb{Z}/2^{\nu})\longrightarrow
GW^{\left[  n\right]  }(X_{\mathrm{\acute{e}t}},\mathcal{L};\,\mathbb{Z}%
/2^{\nu}) \label{eqn:QL}%
\end{equation}
is (\textit{$\mathrm{vcd}$}$_{2}(X)-2$)-coconnected.
\end{theorem}

Theorem \ref{thm:QL} is the evident analog for Hermitian $K$-theory of the
well known $K$-theoretic Quillen-Lichtenbaum conjecture. In \cite{RO05}, the
$K$-theory analog was proved in essentially the same generality as in Theorem
\ref{thm:QL} above; see also Theorem \ref{thm:KQL} below. The statement of
Theorem \ref{thm:QL} was conjectured in \cite{BKO}, where the map was shown to
be split surjective in sufficiently high degrees.



\subsection*{Acknowledgments}

The first author acknowledges NUS research grant R 146-000-137-112 and
Singapore Ministry of Education grant MOE2010-T2-2-125. The third author
acknowledges NSF research grant DMS ID 0906290. He would like to thank Thomas
Unger for useful conversations related to this work. The fourth author
acknowledges RCN research grant 185335/V30.\medskip

\section{Preliminaries\label{Section 2 Preliminaries}}

In this section, we collect a few well-known facts; no originality is claimed.

For a given scheme $X$, fix a line bundle $\mathcal{L}$ on $X$ and set
$\ell=2^{\nu}$. For legibility we often write $GW(X)$ for the spectrum
$GW^{[n]}(X,\mathcal{L})$, $GW/\ell(X)$ for $GW^{[n]}(X,\mathcal{L}%
;\,\mathbb{Z}/\ell)$, $K(X)$ for $K^{[n]}(X,\mathcal{L})$, etc. We also
sometimes drop the parameter scheme $X$.

\begin{Preliminaries}
\textbf{The $C_{2}$-action on $K$-theory and homotopy fibrations.}%
\label{subsec:Prelim}
\end{Preliminaries}

Although it will not be needed in this paper, we note that the $C_{2}$-action
on $K^{[0]}(X,\mathcal{O}_{X})$ (resp.~$K^{[2]}(X,\mathcal{O}_{X})$) coincides
up to homotopy with the $C_{2}$-action defined in \cite{BKO1} and \cite{BKO}
(in the affine case) with the sign of symmetry $\varepsilon=1$
(resp.~$\varepsilon=-1)$.

Now suppose that the scheme $X$ has an ample family of line bundles, and
$\frac{1}{2}\in\Gamma(X,\mathcal{O}_{X})$. In \cite[Theorem 7.6]%
{Sch(myderived)}, the following are shown to hold. \vspace{1ex}

\begin{enumerate}
\item \label{item:HcartSqare} There is a homotopy fibration of spectra
\[
K^{\left[  n\right]  }(X,\mathcal{L})_{hC_{2}}\longrightarrow GW^{\left[
n\right]  }(X,\mathcal{L})\longrightarrow L^{\left[  n\right]  }%
(X,\mathcal{L}).
\]
The first term is the homotopy orbit spectrum for the $C_{2}$-action on the
$K$-theory spectrum $K^{\left[  n\right]  }(X,\mathcal{L})$. The homotopy
groups $\pi_{j}L^{\left[  n\right]  }(X,\mathcal{L})$ are naturally isomorphic
to Balmer's Witt groups $W^{n-j}(X,\mathcal{L})$ for all $n,j\in\mathbb{Z}$.
Recall from \cite{Balmer} that the groups $W^{i}$ are $4$-periodic in $i$ and
coincide with the classical Witt groups in degrees $\equiv0\ (\mathrm{mod\;}%
4)$. For a local ring $R$ with $\frac{1}{2}\in R$ we have $W^{i}(R)=0$ for
$i\not \equiv 0\ (\mathrm{mod\;}4)$.

\item \label{item:Fperiodic} There is a homotopy cartesian square of spectra
\[%
\begin{array}
[c]{ccc}%
GW^{\left[  n\right]  }(X,\mathcal{L}) & \longrightarrow & L^{\left[
n\right]  }(X,\mathcal{L})\\
\downarrow &  & \downarrow\\
K^{\left[  n\right]  }(X,\mathcal{L})^{hC_{2}} & \longrightarrow & \hat
{H}(C_{2},\,K^{\left[  n\right]  }(X,\mathcal{L}))
\end{array}
\]
where for a spectrum $Y$ with $C_{2}$-action, the Tate spectrum $\hat{H}%
(C_{2},Y)$ is the cofiber of the hypernorm map $Y_{hC_{2}}\rightarrow
Y^{hC_{2}}$; see \cite[Ch. 3]{Jardineetale}.

\item \label{item3:subsec:Prelim} Let $\eta\in GW_{-1}^{[-1]}(\mathbb{Z}%
[\frac{1}{2}])\cong W^{0}(\mathbb{Z}[\frac{1}{2}])$ correspond to the unit
$1\in W^{0}(\mathbb{Z}[\frac{1}{2}])$. Then the horizontal maps in
(\ref{item:Fperiodic}) induce equivalences of spectra
\[
GW^{\left[  n\right]  }[\eta^{-1}]\overset{\cong}{\longrightarrow}L^{\left[
n\right]  }\hspace{2ex}\text{and}\hspace{2ex}(K^{\left[  n\right]  })^{hC_{2}%
}[\eta^{-1}]\overset{\cong}{\longrightarrow}\hat{H}(C_{2},\,K^{\left[
n\right]  }).
\]
\noindent Both spectra $L^{\left[  n\right]  }(X,\mathcal{L})$ and $\hat
{H}(C_{2},\,K^{\left[  n\right]  }(X,\mathcal{L}))$ are $4$-periodic and the
map $L^{\left[  n\right]  }(X,\mathcal{L})\rightarrow\hat{H}(C_{2}%
,\,K^{\left[  n\right]  }(X,\mathcal{L}))$ commutes with the periodicity maps
by \cite[Remark 7.7]{Sch(myderived)}, \cite{WW00}. Hence the homotopy fiber
$\mathcal{F}$ of $GW^{\left[  n\right]  }(X,\mathcal{L})\rightarrow K^{\left[
n\right]  }(X,\mathcal{L})^{hC_{2}}$ satisfies
\[
\pi_{i}\mathcal{F}\cong\pi_{i+4}\mathcal{F}\text{\quad for all }i\in
\mathbb{Z}\text{.}%
\]

\end{enumerate}

\begin{Remarks}
For the affine non-connective analogs of the previous statements, see also
\cite{Kob99} and \cite[Theorem 13]{Wil05}. A possible generalization to
schemes goes via the Mayer-Vietoris principle \cite{Sch10(myMV)}. An alternate
approach is developed in \cite{Sch(myderived)}.
\end{Remarks}

\begin{Preliminaries}
\textbf{Presheaves of spectra} \label{subsec:PshSpXet}

Let $X$ be a scheme. Its small \'etale site $X_{\mathrm{\acute{e}t}}$ is
comprised of finite type \'etale $X$-schemes $U\to X$ and maps between
$X$-schemes, along with \'etale coverings. If $\operatorname{vcd}_{2}(X)=n$,
then $\operatorname{vcd}_{2}(U)\leq n$ for all $U \in X_{\mathrm{\acute{e}t}}$.

We denote by $\operatorname{PSp}(X_{\mathrm{\acute{e}t}})$ the model category
of presheaves of spectra on $X_{\mathrm{\acute{e}t}}$
\cite{Jardine:simplSpectra}. Its objects are contravariant functors from
$X_{\mathrm{\acute{e}t}}$ to spectra and maps are natural transformations of
such functors. We are mainly interested in the presheaves of spectra
$GW^{[n]}(\mathcal{L})$ sending $p:U\rightarrow X$ to $GW^{[n]}(U,p^{\ast
}\mathcal{L})$ and its $K$-theory analog. We shall often suppress
$\mathcal{L}$ and $[n]$ in the notation.

A map of presheaves of spectra $\mathcal{F}\rightarrow\mathcal{G}$ on
$X_{\mathrm{\acute{e}t}}$ is: \newline(1) a \emph{pointwise weak equivalence}
if for all $U\in X_{\mathrm{\acute{e}t}}$, the map $\mathcal{F}(U)\rightarrow
\mathcal{G}(U)$ is an equivalence of spectra; \newline(2) a \emph{local weak
equivalence} if for all points $x\in X$, $\mathcal{F}_{x}\rightarrow
\mathcal{G}_{x}$ is a weak equivalence of spectra, where $\mathcal{F}_{x}$ is
the filtered colimit $\mathcal{F}_{x}=\colim_{U\rightarrow X}\mathcal{F}(U)$
over all \'{e}tale neighborhoods $U$ of $x$; \newline(3) a \emph{cofibration}
if it is pointwise a cofibration, that is, if for all $U\in X_{\mathrm{\acute
{e}t}}$, the map $\mathcal{F}(U)\rightarrow\mathcal{G}(U)$ is a cofibration of
spectra in the sense of \cite{BousfieldFriedlander}; and \newline(4) a
\emph{local fibration} if it has the right lifting property with respect to
all cofibrations which are also local weak equivalences.

It is proved in \cite{Jardine:simplSpectra} that the category
$\operatorname{PSp}(X_{\mathrm{\acute{e}t}})$ together with the local weak
equivalences, cofibrations and local fibrations is a proper closed
(simplicial) model category.

Note that by Theorem \ref{rigidity} below, $(GW^{[n]}/\ell)_{x}\simeq
GW^{[n]}/\ell(k)$, where $k$ is a separable closure of the residue field of
$x$. However, there is, \textit{a priori}, no evident equivalence between
$(K^{hC_{2}}/\ell)_{x}$ and $K/\ell(k)^{hC_{2}}$ since the homotopy fixed
point functor $(\phantom{K})^{hC_{2}}$ does not commute with filtered
colimits, in general. Compare Lemma \ref{lem:etalHtpyLim} below.

From the theory of model categories, there exists a globally fibrant
replacement functor
\begin{equation}
\operatorname{PSp}(X_{\mathrm{\acute{e}t}})\longrightarrow\operatorname{PSp}%
(X_{\mathrm{\acute{e}t}}):\mathcal{F}\longmapsto\mathcal{F}^{\mathrm{\acute
{e}t}}. \label{eqn:etFibRpl}%
\end{equation}
By definition, this is a functor equipped with a natural local weak
equivalence $\mathcal{F}\rightarrow\mathcal{F}^{\mathrm{\acute{e}t}}$ for
which the map from $\mathcal{F}^{\mathrm{\acute{e}t}}$ to the final object is
a local fibration. The Hermitian Quillen-Lichtenbaum Theorem \ref{thm:QL} is a
statement about the map $\mathcal{F}\rightarrow\mathcal{F}^{\mathrm{\acute
{e}t}}$ when $\mathcal{F}$ is the Hermitian $K$-theory presheaf.

Call a square of presheaves of spectra \emph{pointwise homotopy cartesian} if
it becomes a homotopy cartesian square of spectra when evaluated at all finite
type \'{e}tale $X$-schemes. We need the following observations.
\end{Preliminaries}

\begin{lemma}
\label{lem:propsFet}

\begin{enumerate}
\item The globally fibrant replacement functor (\ref{eqn:etFibRpl}) sends
pointwise homotopy cartesian squares to pointwise homotopy cartesian squares.

\item Let $n$ be an integer. If a presheaf of spectra $\mathcal{F}$ satisfies
$\pi_{i}(\mathcal{F}_{x})=0$ for all $i\geq n$ and all points $x\in X$ then
$\pi_{i}(\mathcal{F}^{\mathrm{\acute{e}t}}(U))=0$ for all $i\geq n$ and $U\in
X_{\mathrm{\acute{e}t}}$.
\end{enumerate}
\end{lemma}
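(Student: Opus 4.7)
The plan is to deduce both parts from two structural features of Jardine's local model structure on $\operatorname{PSp}(X_{\et})$: (a) stalks, being filtered colimits, commute with homotopy pullbacks of spectra, and (b) a local weak equivalence between globally fibrant presheaves of spectra is automatically a pointwise weak equivalence (the defining property of $(-)^{\et}$). For (1), I would observe that a pointwise homotopy cartesian square is stalkwise homotopy cartesian by (a), hence homotopy cartesian in Jardine's local model structure. Applying $(-)^{\et}$ term-by-term preserves this, since each corner maps to its replacement by a natural local weak equivalence and homotopy cartesianness is preserved by levelwise local weak equivalence in a proper model category. Feature (b), applied to the comparison map from $\mathcal{F}^{\et}$ to the pointwise homotopy pullback of the remaining three replaced corners, then promotes locally homotopy cartesian to pointwise homotopy cartesian.

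For (2), since stalks are computed by the filtered colimit $\mathcal{F}_x = \colim_{U \to x}\mathcal{F}(U)$ and \'etale sheafification commutes with filtered colimits, the hypothesis is equivalent to the vanishing, for $i \geq n$, of the \'etale sheafification $\tilde{\pi}_i\mathcal{F}$ of the presheaf $\pi_i\mathcal{F}$. The Postnikov truncation $\mathcal{F}\to \tau_{\leq n-1}\mathcal{F}$ is therefore a local weak equivalence, so one may replace $\mathcal{F}$ by its truncation, which has a finite Postnikov tower of length at most $n$ whose layers are the Eilenberg--MacLane presheaves of spectra $K(\tilde{\pi}_i\mathcal{F}, i)$ for $0 \leq i \leq n-1$. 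The key computation, standard for this model structure \cite{Jardine:simplSpectra}, is the identification $K(A,i)^{\et}(U) \simeq \mathbf{R}\Gamma_{\et}(U,A)[i]$, whose $j$-th homotopy group is $H^{i-j}_{\et}(U,A)$; this vanishes whenever $j > i$, and in particular whenever $j \geq n$. Invoking part (1) to ensure that $(-)^{\et}$ sends the defining fiber sequences of the Postnikov tower to pointwise homotopy fiber sequences, one concludes by induction up the (finite) tower and the associated long exact sequence in homotopy groups.

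I expect the main obstacle to be the Eilenberg--MacLane identification $K(A,i)^{\et}(U) \simeq \mathbf{R}\Gamma_{\et}(U,A)[i]$, which is the one non-formal step and rests on the explicit model-theoretic construction of $(-)^{\et}$. Once this descent calculation is secured, the remainder is a formal manipulation of features (a) and (b) above.
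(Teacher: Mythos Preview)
Your argument for part~(1) is correct but dual to the paper's. The paper observes that for spectra homotopy cartesian coincides with homotopy cocartesian, and then notes that since cofibrations in Jardine's model structure are pointwise and pointwise weak equivalences are local, the fibrant replacement manifestly preserves pointwise homotopy \emph{cocartesian} squares. Your route via stalks and feature~(b) reaches the same conclusion with a little more scaffolding; in particular you are tacitly using that the pointwise homotopy pullback of locally fibrant presheaves computes the homotopy pullback in the local model structure and is again locally fibrant, which follows from general Bousfield-localization theory but is worth making explicit.

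For part~(2) there is a genuine gap. You assert that $\tau_{\leq n-1}\mathcal{F}$ has a \emph{finite} Postnikov tower with layers indexed by $0\le i\le n-1$. This presupposes that $\mathcal{F}$ is connective, which is nowhere assumed: a general presheaf of spectra may have nontrivial homotopy in arbitrarily negative degrees, so the tower is infinite and your induction does not terminate. The paper avoids this entirely by citing \cite[Proposition~6.12]{Jardineetale}. Your Eilenberg--MacLane identification and layer-by-layer strategy are the right ingredients, but to treat the unbounded case you would need an additional convergence input---for instance, that $\mathcal{F}^{\et}(U)$ is the homotopy limit of the tower $\{(\tau_{\le k}\mathcal{F})^{\et}(U)\}_k$, or equivalently that the descent spectral sequence converges in the relevant range---and this is precisely the nontrivial content supplied by Jardine's proposition. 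A smaller point: the layers of the Postnikov tower of a \emph{presheaf} of spectra are $K(\pi_i\mathcal{F},i)$ built from the presheaf homotopy groups, not their sheafifications $\tilde{\pi}_i\mathcal{F}$; the distinction disappears after applying $(-)^{\et}$, but as written your tower is not the Postnikov tower of $\tau_{\le n-1}\mathcal{F}$.
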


\begin{proof}
Both statements are true for any small Grothendieck site (with enough points
so that we can formulate the second part of the lemma). For the first part,
recall that in the category of spectra, homotopy cartesian is the same as
homotopy co-cartesian. Since cofibrations are pointwise cofibrations and
pointwise weak equivalences are local weak equivalences, it is clear that the
globally fibrant replacement functor preserves pointwise homotopy co-cartesian
squares. The second part is explicitly stated in \cite[Proposition
6.12]{Jardineetale}.
\end{proof}

There are evident analogs for the Nisnevich topology $X_{\mathrm{Nis}}$ on
$X$. Details are \textit{mutatis mutandis} the same. \medskip

For later reference we include the following results. Recall that $\ell
=2^{\nu}$.

\begin{lemma}
\label{lem:PureInsepExt} Let $F\subset L$ be a purely inseparable algebraic
extension of fields of characteristic $\neq2$. Then the inclusion $F\subset L$
induces equivalences of spectra
\[
K/\ell(F)\overset{\simeq}{\longrightarrow}K/\ell(L),\hspace{4ex}GW^{[n]}
/\ell(F)\overset{\simeq}{\longrightarrow}GW^{[n]}/\ell(L)
\]
and an isomorphism of Witt groups
\[
W(F)\overset{\cong}{\longrightarrow}W(L).
\]

\end{lemma}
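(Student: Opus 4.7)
In characteristic zero a purely inseparable algebraic extension is trivial, so we may assume $\mathrm{char}(F)=p$ is odd. Since $K/\ell$, $GW^{[n]}/\ell$ and $W$ all commute with filtered colimits of commutative rings, writing $L$ as the filtered union of its finite purely inseparable subextensions reduces the whole statement to the case $[L:F]=p^n<\infty$. The three assertions are then proved separately and glued together via the fundamental fibration of \ref{item:HcartSqare}.

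For the $K$-theory equivalence, my strategy is to use the transfer $\mathrm{tr}\colon K(L)\to K(F)$ given by restriction of scalars along the finite free ring extension $F\subset L$, together with $\mathrm{res}\colon K(F)\to K(L)$. The projection formula yields $\mathrm{tr}\circ\mathrm{res}=p^n\cdot\mathrm{id}$ on $K(F)$ directly. For the reverse composite, the key geometric input is that $L\otimes_F L$ is a local Artinian $L$-algebra of length $p^n$ whose maximal ideal $I$ is nilpotent, with residue field $L$: nilpotency follows from the identity $(x\otimes 1 - 1\otimes x)^{p^N}=0$ for $x\in L$ with $x^{p^N}\in F$. On the subquotients $V_j:=I^j/I^{j+1}$ the two $L$-actions coincide (any commutator lies in $I\cdot V_j=0$), so each $V_j$ is an $L$-vector space of some dimension $d_j$ with $\sum_j d_j=p^n$. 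Applying the additivity theorem to the functorial filtration of $M\mapsto(L\otimes_F L)\otimes_L M$ by the subfunctors $I^j\otimes_L M$ then yields
\[
\mathrm{res}\circ\mathrm{tr}\;=\;\sum_j d_j\cdot\mathrm{id}\;=\;p^n\cdot\mathrm{id}
\]
on $K(L)$. Since $p^n$ is odd and hence a unit in $\mathbb{Z}/\ell$, both composites become equivalences after mod-$\ell$ reduction, so $\mathrm{res}$ does too.

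For the Witt group, Springer's theorem on odd-degree extensions gives injectivity of $W(F)\to W(L)$. For surjectivity, $W(L)$ is generated by the classes $\langle a\rangle$ with $a\in L^\times$; choosing $N$ so that $a^{p^N}\in F$ and writing $p^N = 2m+1$, the identity $a^{p^N} = a\cdot(a^m)^2$ shows $\langle a\rangle=\langle a^{p^N}\rangle$ in $W(L)$ with $a^{p^N}\in F^\times$. Hence $W(F)\to W(L)$ is an isomorphism.

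Finally, the $GW^{[n]}/\ell$ equivalence is deduced from the previous two ingredients by invoking the homotopy fibration
\[
K^{[n]}_{hC_2}\longrightarrow GW^{[n]}\longrightarrow L^{[n]}
\]
of \ref{item:HcartSqare}. Its leftmost term becomes a mod-$\ell$ equivalence because homotopy orbits preserve equivalences of spectra. The rightmost term has homotopy groups $W^{n-i}$, which for local rings with $\tfrac{1}{2}$ invertible vanish outside degrees $\equiv 0\pmod 4$, so the isomorphism $W(F)\cong W(L)$ upgrades via $4$-periodicity to an integral equivalence $L^{[n]}(F)\simeq L^{[n]}(L)$. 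The five-lemma on the long exact sequence of the fibration then gives the $GW^{[n]}/\ell$ equivalence. The main technical obstacle I anticipate is the additivity calculation for $\mathrm{res}\circ\mathrm{tr}$, which relies critically on the coincidence of the two $L$-actions on the subquotients $V_j$.
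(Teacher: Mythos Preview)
Your proof is correct and follows essentially the same approach as the paper: the paper simply cites Quillen (for the $K$-theory statement) and Arason (for the Witt-group statement), and then deduces the $GW^{[n]}/\ell$ equivalence from the homotopy fibration $K_{hC_2}\to GW\to L$ together with the vanishing of $W^i$ for fields in degrees $\not\equiv 0\pmod 4$, exactly as you do. Your transfer--restriction/additivity argument and your Springer/square-class argument are precisely the standard proofs underlying those two citations, so you have spelled out what the paper leaves to the literature.
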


\begin{proof}
The $K$-theory statement is due to Quillen \cite[Proposition 4.8]%
{quillen:higherI}. For Witt-groups, see \cite[p.~456, \S 2]{Arason:CohInv}.
The result for $GW^{[n]}/\ell$ now follows from the homotopy fibration
(\ref{subsec:Prelim} (\ref{item:HcartSqare})) and the vanishing of $W^{i}(k)$
for $k$ a field and $i\not \equiv 0\ (\mathrm{mod\;}4)$.
\end{proof}

\begin{theorem}
[Rigidity]\label{rigidity} Let $R$ be a Henselian local ring with residue
field $k$ and $\frac{1}{2}\in R$. Then the map $R\rightarrow k$ induces
equivalences of spectra
\[
K/\ell(R)\overset{\simeq}{\longrightarrow}K/\ell(k),\hspace{4ex}GW^{[n]}%
/\ell(R)\overset{\simeq}{\longrightarrow}GW^{[n]}/\ell(k)
\]
and an isomorphism of Witt groups
\[
W(R)\overset{\cong}{\longrightarrow}W(k).
\]

\end{theorem}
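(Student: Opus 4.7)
The proof splits naturally into the three assertions, the first two of which are classical and the third of which is a formal consequence combined via the machinery already recorded in the preliminaries.

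For the $K$-theory statement $K/\ell(R)\xrightarrow{\simeq} K/\ell(k)$, the plan is simply to invoke Gabber's rigidity theorem for $K$-theory with finite coefficients of henselian local rings in which $\ell$ is invertible. This is the only deep ingredient in the proof, and it would be cited rather than reproved.

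For the Witt group statement $W(R)\xrightarrow{\cong} W(k)$, the plan is to cite the classical rigidity result for Witt groups of henselian local rings with $\tfrac{1}{2}$ invertible. Its proof is elementary: every nondegenerate symmetric bilinear form over $k$ lifts uniquely (up to isomorphism) to such a form over $R$ via Hensel's lemma, hyperbolic forms lift to hyperbolic forms, and isometries likewise lift; this yields both surjectivity and injectivity of $W(R)\to W(k)$. A standard reference (e.g.\ Knebusch, or Milnor--Husemoller) is all that is needed.

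For the Grothendieck--Witt statement, the plan is to use the homotopy fibration of Preliminaries \ref{subsec:Prelim}(\ref{item:HcartSqare})
\[
K^{[n]}(R,\mathcal{L})_{hC_2}\longrightarrow GW^{[n]}(R,\mathcal{L})\longrightarrow L^{[n]}(R,\mathcal{L})
\]
and its analogue for $k$, together with naturality in $R\to k$. Smashing with the mod-$\ell$ Moore spectrum preserves this fibration. The left-hand map $K^{[n]}(R,\mathcal{L})_{hC_2}/\ell \to K^{[n]}(k,\mathcal{L})_{hC_2}/\ell$ is an equivalence because $K/\ell$-rigidity (step one) is $C_2$-equivariant for the duality action and homotopy orbits preserve equivalences of $C_2$-spectra. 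For the right-hand map, I would argue that since $R$ is local with $\tfrac{1}{2}\in R$, Preliminaries \ref{subsec:Prelim}(\ref{item:HcartSqare}) tells us that $W^i(R)=0$ for $i\not\equiv 0\pmod 4$, so $L^{[n]}(R,\mathcal{L})$ is a $4$-periodic spectrum whose only nontrivial homotopy group is $W(R)$ (up to twist by $\mathcal{L}$), and likewise over $k$; combined with Witt group rigidity (step two), the map $L^{[n]}(R,\mathcal{L})\to L^{[n]}(k,\mathcal{L})$ is an equivalence, and therefore so is its mod-$\ell$ reduction. Applying the five lemma to the long exact sequences of homotopy groups of the two mod-$\ell$ fibrations yields the desired equivalence $GW^{[n]}/\ell(R)\xrightarrow{\simeq} GW^{[n]}/\ell(k)$.

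The only real obstacle is the invocation of Gabber's rigidity theorem for $K$-theory; everything else is a formal combination of established facts recalled in the preliminaries, and no additional calculation is required.
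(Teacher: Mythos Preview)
Your proposal is correct and follows exactly the paper's approach: cite Gabber for $K/\ell$, cite Knebusch for Witt groups, and deduce the $GW^{[n]}/\ell$ statement from the homotopy fibration \ref{subsec:Prelim}(\ref{item:HcartSqare}). You have simply spelled out in more detail the argument that the paper compresses into a single sentence.
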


\begin{proof}
The $K$-theory (resp.~Witt-theory) result is due to Gabber
\cite{Gabber:Rigidity} (resp.~Knebusch \cite[Satz 3.3]{Knebusch:Isometrien}).
The claim for Grothendieck-Witt theory then follows from the homotopy
fibration (\ref{subsec:Prelim} (\ref{item:HcartSqare})).
\end{proof}

The following theorem is implicit in \cite{RO05} but was formulated only as an
equivalence on ($\operatorname{vcd}_{2}(X)-2$)-connected covers.

\begin{theorem}
[$K$-theoretic Quillen-Lichtenbaum]\label{thm:KQL} Let $X$ be a QL scheme.
Then for all $\nu\geq1$ the natural map
\[
K(X;\,\mathbb{Z}/2^{\nu})\longrightarrow K^{\mathrm{\acute{e}t}}%
(X;\,\mathbb{Z}/2^{\nu})
\]
is (\textit{$\mathrm{vcd}$}$_{2}(X)-2$)-coconnected.
\end{theorem}

\begin{proof}
With $\ell=2^{\nu}$ the map in the theorem factors as
\begin{equation}
K/\ell(X)\rightarrow(K/\ell)^{\mathrm{Nis}}(X)\rightarrow(K/\ell
)^{\mathrm{\acute{e}t}}(X) \label{eqn:KnisKet}%
\end{equation}
where $(K/\ell)^{\mathrm{Nis}}$ denotes a globally fibrant model for the
Nisnevich topology. We first show that the second map is
(\textit{$\mathrm{vcd}$}$_{2}(X)-2$)-coconnected. For fields, this is
\cite[(11), \S 5]{RO05}. The case of Henselian rings reduces to the case of
fields, by rigidity for $K$-theory and its \'{e}tale version (see
\emph{e.g.}~the proof of \cite[Lemma 4.14]{Mitchell:hyper} and
\cite[Proposition 6.1]{Mitchell}). For general $X$ as in the theorem, the
result follows from the Henselian case in view of the strongly convergent
Nisnevich descent spectral sequence applied to the homotopy fiber of the
second map in (\ref{eqn:KnisKet}).

To finish the proof, we note that the first map in (\ref{eqn:KnisKet}) is
always $0$-coconnected, so the theorem follows as soon as $\operatorname{vcd}%
_{2}(X)\geq2$. Since $\operatorname{dim}X\leq\operatorname{vcd}_{2}X$, we are
left with the cases $\operatorname{dim}X=0,1$. If $\operatorname{dim}X=0$ then
the first map in (\ref{eqn:KnisKet}) is an equivalence. If $\operatorname{dim}
X=1$ then this map is $(-1)$-coconnected. This assertion follows from the fact
that $K_{-1}$ is torsion free for noetherian schemes of Krull dimension
$\leq1$; see \cite[Lemma 2.5 (2)]{weibelDuke}. Therefore, the maps
$K/\ell\rightarrow\mathbb{K}/\ell$ and hence $(K/\ell)^{\mathrm{Nis}%
}\rightarrow(\mathbb{K}/\ell)^{\mathrm{Nis}}$ are $(-1)$-coconnected for such
schemes. Moreover, $\mathbb{K}/\ell\rightarrow(\mathbb{K}/\ell)^{\mathrm{Nis}%
}$ is a pointwise weak equivalence, by \cite{TT90}.
\end{proof}

\begin{lemma}
\label{W2tors} Suppose that $X$ is a quasi-compact scheme with an ample family
of line bundles. Then the following are equivalent.

\begin{enumerate}
\item[(1)] \label{W2torsA}\ There exists an integer $n>0$ such that
$2^{n}W(X)=0$.

\item[(2)] \label{W2torsB}\ $-1$ is a sum of squares in all the residue fields
of $X$.
\end{enumerate}
\end{lemma}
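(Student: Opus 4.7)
I prove the two implications separately; $(1)\Rightarrow(2)$ is elementary, while $(2)\Rightarrow(1)$ contains the real work.

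For $(1)\Rightarrow(2)$: for any point $x\in X$, the canonical morphism $\operatorname{Spec} k(x)\to X$ induces a ring homomorphism $W(X)\to W(k(x))$ sending $\langle 1\rangle$ to $\langle 1\rangle$. Under (1), the element $2^{n}\langle 1\rangle$ maps to $0$, so the diagonal rank-$2^{n}$ form $\langle 1,\ldots,1\rangle$ is hyperbolic over the field $k(x)$, hence isotropic. Normalizing a nonzero coordinate of an isotropic vector writes $-1$ as a sum of at most $2^{n}-1$ squares in $k(x)$, which is (2).

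For $(2)\Rightarrow(1)$, the first stage is to extract a uniform bound on the levels of all residue fields by exploiting quasi-compactness. For each $m\ge 1$, define the $X$-scheme
\[
V_m:=\underline{\operatorname{Spec}}_X\bigl(\mathcal{O}_X[t_1,\ldots,t_m]/(1+t_1^{2}+\cdots+t_m^{2})\bigr).
\]
Because $\tfrac12\in\mathcal{O}_X$ and $\sum t_i^{2}=-1\ne 0$ forces at least one $t_i$ to be nonzero at every point of $V_m$, the Jacobian $(2t_1,\ldots,2t_m)$ has rank one everywhere on $V_m$; hence $V_m\to X$ is smooth. Smooth morphisms are open, so the image $U_m\subseteq X$ is open. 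Hypothesis (2) yields $X=\bigcup_{m\ge 1}U_m$, and quasi-compactness of $X$ forces $X=U_m$ for some fixed $m$.

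The second stage -- the main obstacle -- is to convert this uniform level bound into a $2$-primary torsion bound on $W(X)$. My primary route is via the real spectrum: hypothesis (2) is equivalent to $\operatorname{Sper}(X)=\emptyset$, so the total signature map $W(X)\to C(\operatorname{Sper}(X),\mathbb{Z})$ has zero target. By the Knebusch--Mahé theorem, whose formulation for quasi-compact schemes with $\tfrac12\in\mathcal{O}_X$ and an ample family of line bundles can be bootstrapped from the affine case using a Mayer--Vietoris argument, the kernel of this map is bounded $2$-primary torsion, with bound depending only on the uniform level $m$. Hence $2^{k}W(X)=0$ for some $k$, proving (1). A more hands-on alternative would be to pass to the finite étale degree-$2$ cover $Y:=\underline{\operatorname{Spec}}_X(\mathcal{O}_X[t]/(t^{2}+1))\to X$ (étale since $\tfrac12\in\mathcal{O}_X$), on which $-1=t^{2}$ is a square so $\langle 1,1\rangle_Y\cong\langle 1,-1\rangle_Y=H$ in $W(Y)$ and hence $2\,W(Y)=0$; one would then need to descend this bound to $X$ via the Scharlau transfer / Galois-cohomology machinery for the $C_2$-Galois cover $Y/X$, which is the technical heart either way.
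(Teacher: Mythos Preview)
Your direction $(1)\Rightarrow(2)$ is correct.

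In $(2)\Rightarrow(1)$, Stage 1 contains a genuine error: the scheme-theoretic image $U_m$ of $V_m\to X$ is all of $X$ for every $m\geq 1$, independent of hypothesis (2). The fibre over $x$ is $\operatorname{Spec} k(x)[t_1,\dots,t_m]/(1+\sum t_i^2)$, a non-empty scheme whether or not it has a $k(x)$-rational point---for $m=1$ and $k(x)=\mathbb{R}$ it is $\operatorname{Spec}\mathbb{C}$. You have conflated ``fibre non-empty'' with ``fibre has a $k(x)$-rational point'', and the latter locus need not be open: over $X=\operatorname{Spec}\mathbb{Z}_{(5)}$ one has $-1=2^2$ in the closed residue field $\mathbb{F}_5$, yet $-1$ is not a sum of any number of squares in the generic residue field $\mathbb{Q}$, so no open spreading-out occurs. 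Your uniform level bound is therefore not established.

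Stage 2 then leans on that bound: you claim the kernel of the total signature map is $2$-primary torsion \emph{with exponent depending only on the level $m$}. With Stage 1 removed, what remains is the bare assertion that this kernel is annihilated by a single power of $2$, which is precisely the non-trivial content of $(2)\Rightarrow(1)$; neither your Mayer--Vietoris bootstrap nor the $C_2$-transfer sketch supplies that exponent. The paper's proof makes no detour through a uniform level bound: it directly cites Knebusch's Theorem 3, which gives the equivalence of (1) with ``every residue-field Witt group is $2$-primary torsion'', and then Pfister's level theorem (Scharlau, Theorem II.7.1) identifies the latter condition with (2). Your Stage-2 invocation of ``Knebusch--Mah\'e'' is thus pointing at the right literature, but the proof as written does not stand on its own.
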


\begin{proof}
In \cite[Theorem 3, p. 189]{Knebusch} it is shown that (1) is equivalent to
the statement that all the residue fields of $X$ have $2$-primary torsion Witt
groups. The latter is equivalent to (2); see for instance \cite[Theorem
II.7.1]{Scharlau:Book}.
\end{proof}

\begin{remark}
If $X$ is affine, the condition that $-1$ is a sum of squares in all the
residue fields of $X$ is equivalent to $-1$ being a sum of squares in
$\Gamma(X,\mathcal{O}_{X})$; see for instance \cite[Proposition 4,
p.~190]{Knebusch}. If $X$ is non-affine, then $-1$ might be a sum of squares
in all residue fields without being a sum of squares in $\Gamma(X,\mathcal{O}%
_{X})$. Indeed, every smooth projective real curve $X$ with function field of
level $>1$ has the property that $-1$ is a sum of squares in all of its
residue fields, but not in $\Gamma(X,\mathcal{O}_{X})=\mathbb{R}$. For
example, take the closed subscheme of $\mathbb{P}_{\mathbb{R}}^{2}%
=\operatorname{Proj}(\mathbb{R}[X,Y,Z])$ cut out by the equation $X^{2}%
+Y^{2}+Z^{2}=0$.
\end{remark}

\medskip

\section{Proofs \label{Section 2: Proofs}}

Our proof of the following lemma uses the main result in \cite{HKO}.

\begin{lemma}
\label{lem:HtpyLimitFields} Theorem \ref{thm:Z2htpylimit} holds for fields $k$
with \textit{$\mathrm{vcd}$}$_{2}(k)<\infty$ and $\operatorname{char}(k)\neq2$.
\end{lemma}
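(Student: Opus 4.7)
The plan is to reduce to the characteristic-zero case handled by Hu-Kriz-Ormsby \cite{HKO} via Rigidity (Theorem \ref{rigidity}) and a localization-devissage argument.

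Given $k$ of odd characteristic with $\operatorname{vcd}_2(k)<\infty$, I would choose a Cohen ring $A$: a complete (hence henselian) discrete valuation ring with residue field $k$ and fraction field $F$ of characteristic $0$. For a complete DVR in unequal characteristic one has $\operatorname{cd}_2(F)=\operatorname{cd}_2(k)+1$ (standard), so $\operatorname{vcd}_2(F)<\infty$ and the theorem of Hu-Kriz-Ormsby applies to $F$, giving an equivalence
\[
GW^{[n]}/\ell(F)\overset{\simeq}{\longrightarrow}K^{[n]}/\ell(F)^{hC_{2}}
\]
for every $n$ and $\nu\ge 1$. By Rigidity, the reduction $A\twoheadrightarrow k$ induces equivalences $GW^{[n]}/\ell(A)\simeq GW^{[n]}/\ell(k)$ and $K^{[n]}/\ell(A)\simeq K^{[n]}/\ell(k)$, and these are compatible with the duality functor, hence with the $C_{2}$-action and the Williams comparison map. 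Thus it suffices to establish the theorem for $A$.

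To bridge $A$ and $F$, I would use the Quillen localization fibration $K/\ell(k)\to K/\ell(A)\to K/\ell(F)$, which remains a fibration after applying $(\phantom{X})^{hC_{2}}$, together with its $GW$-theoretic analogue obtained from devissage of the category of $A$-modules supported at the closed point. Because the closed point has codimension $1$ in $\operatorname{Spec}(A)$ and the conormal line $\mathfrak{m}/\mathfrak{m}^{2}$ is one-dimensional and trivializable, devissage produces a homotopy fibration
\[
GW^{[n-1]}/\ell(k)\longrightarrow GW^{[n]}/\ell(A)\longrightarrow GW^{[n]}/\ell(F)
\]
together with a map of fibrations to the $hC_{2}$-localization fibration of $K/\ell$ induced by the Williams comparison. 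Since Hu-Kriz-Ormsby makes the rightmost vertical an equivalence, the left squares are homotopy cartesian; combined with Rigidity, this identifies the fibre $\mathcal{F}_{k}^{[n]}:=\operatorname{fib}(GW^{[n]}/\ell(k)\to K^{[n]}/\ell(k)^{hC_{2}})$ with $\mathcal{F}_{k}^{[n-1]}$ for every $n$.

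To conclude that $\mathcal{F}_{k}^{[n]}\simeq 0$, I would combine the identity $\mathcal{F}_{k}^{[n]}\simeq\mathcal{F}_{k}^{[n-1]}$ (which allows one to shift the duality freely) with the $4$-periodicity of $\pi_{i}\mathcal{F}_{k}^{[n]}$ from item (\ref{item3:subsec:Prelim}) of \ref{subsec:Prelim} and with the $K$-theoretic Quillen-Lichtenbaum Theorem \ref{thm:KQL}, which bounds the range in which the comparison map can fail to be an equivalence and so forces vanishing in a window of length at least four. The main obstacle, and the step that requires the most care, is setting up the $GW$-devissage localization fibration over the DVR $A$ in a form that is strictly compatible with the $C_{2}$-equivariant $K$-theory localization fibration, so that the resulting identification between $\mathcal{F}_{k}^{[n]}$ and $\mathcal{F}_{k}^{[n-1]}$ is induced by the Williams map itself (and not merely an abstract equivalence); the cartesian square (\ref{item:Fperiodic}) relating $GW$, $L$, $K^{hC_{2}}$ and the Tate spectrum, together with the vanishing of $L$-theory of henselian local rings away from degrees divisible by four, is what I would use to pin down this compatibility.
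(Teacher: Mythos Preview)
Your setup coincides with the paper's: lift $k$ to characteristic zero via a complete DVR $A$ with fraction field $F$, invoke Hu--Kriz--Ormsby for $F$, and use Rigidity to identify $\mathcal{F}(A)$ with $\mathcal{F}(k)$. The divergence is in how you pass from knowledge about $F$ back to $A$. You use the localization/d\'evissage fibration; the paper instead exhibits $\mathcal{F}(V)$ directly as a retract of $\mathcal{F}(F)$ by showing that the map $V[t,t^{-1}]\to F$, $t\mapsto\pi$, induces an equivalence on $\mathcal{F}$ (checked separately on $K/\ell$ and on Witt groups via comparison of localization sequences), and then uses the augmentation splitting $V\to V[t,t^{-1}]\to V$. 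That retract argument immediately yields $\mathcal{F}(V)\simeq\ast$ with no further input.

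Your route has a genuine gap at the final step. Granting the compatible localization fibrations and HKO for $F$, what you obtain is an equivalence of spectra $\mathcal{F}_k^{[n-1]}\simeq\mathcal{F}_k^{[n]}$ with \emph{no degree shift}. Combined with the $4$-periodicity $\pi_i\mathcal{F}_k^{[n]}\cong\pi_{i+4}\mathcal{F}_k^{[n]}$ from \ref{subsec:Prelim}(\ref{item3:subsec:Prelim}), this says only that $\mathcal{F}_k^{[n]}$ is a $4$-periodic spectrum independent of $n$; it is perfectly consistent with $\mathcal{F}_k^{[n]}$ being nonzero. Your appeal to the $K$-theoretic Quillen--Lichtenbaum Theorem~\ref{thm:KQL} does not close this: that theorem bounds the fibre of $K/\ell\to K^{\mathrm{\acute{e}t}}/\ell$, not of the Williams map $GW/\ell\to (K/\ell)^{hC_2}$, and there is no direct way to extract from it a window of four consecutive degrees in which $\mathcal{F}_k^{[n]}$ vanishes. (Any such argument would amount to proving hermitian QL for $k$, which in the paper's logic is \emph{deduced from} the present lemma, not used to prove it.) What is missing is precisely an independent reason for $\mathcal{F}(A)\simeq\ast$; the paper supplies it via the $V[t,t^{-1}]$ retract trick rather than via localization.
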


\begin{proof}
We claim that the homotopy fiber $\mathcal{F}$ of $GW/\ell\rightarrow
K/\ell^{hC_{2}}$ is contractible. If $\operatorname{char}(k)=0$, this holds by
\cite{HKO}. If $\operatorname{char}(k)>0$, we reduce the claim to the case of
characteristic $0$ by means of \textquotedblleft Teichm{\"{u}}ller
lifting\textquotedblright.\ In effect, we may assume that $k$ is perfect,
since $K/\ell$ and $GW/\ell$ are invariant under purely inseparable algebraic
field extensions (Lemma \ref{lem:PureInsepExt}). Then the ring $V$ of
Witt-vectors over $k$ is a complete (hence Henselian) dvr with residue field
$k$ and fraction field $F$ of characteristic $0$. Furthermore,
\[
\mathrm{vcd}_{2}(F)\leq\mathrm{cd}_{2}(F)=\mathrm{cd}_{2}(k)+1=\mathrm{vcd}%
_{2}(k)+1<\infty\text{,}%
\]
by \cite[Expos\'e X, Th\'eor\`eme 2.2 (ii)]{SGA4Tome3LNM305} and \cite[II
\S 4.1]{Serre:cohGal5iemEdition}.

Let $\pi\in V$ be a uniformizer. We claim that $\alpha\colon V[t,t^{-1}
]\rightarrow F:\ t\mapsto\pi$, induces an equivalence $\mathcal{F}%
(\alpha):\mathcal{F}(V[t,t^{-1}])\overset{\simeq}{\rightarrow}\mathcal{F}(F)$.
It is known that $K/\ell(\alpha)$ is an equivalence (one may use the same
argument as for Witt groups below), and hence $K/\ell^{hC_{2}}(\alpha)$ and
$K/\ell_{hC_{2}}(\alpha)$ are equivalences. To show that $GW/\ell(\alpha)$ is
an equivalence, we consider the spectrum $L$ defined by~the homotopy fibration
$K_{hC_{2}}\rightarrow GW\rightarrow L$ (\ref{subsec:Prelim}
(\ref{item:HcartSqare})). The groups $\pi_{i}L$ are $4$-periodic, trivial for
local rings in degrees $\not \equiv 0\ (\mathrm{mod\;}4)$ and homotopy
invariant for regular rings. Using the localization exact sequence for
$V[t]\rightarrow V[t,t^{-1}]$, we get $W^{i}(V[t,t^{-1}])=0$ for
$i\not \equiv 0\ (\mathrm{mod\;}4)$. Thus, it remains to check that
$W^{0}(V[t,t^{-1}])\rightarrow W^{0}(F)$ is an isomorphism. This follows by a
comparison of the localization exact sequences for $V[t]\rightarrow
V[t,t^{-1}]$ and $V\rightarrow F$, which reduces to a map between short exact
sequences:
\[%
\begin{array}
[c]{ccccccccc}%
0 & \rightarrow & W^{0}(V\left[  t\right]  ) & \longrightarrow &
W^{0}(V\left[  t,t^{-1}\right]  ) & \longrightarrow & W^{0}(V) & \rightarrow &
0\\
&  & \downarrow &  & \downarrow &  & \downarrow &  & \\
0 & \rightarrow & W^{0}(V) & \longrightarrow & W^{0}(F) & \longrightarrow &
W^{0}(k) & \rightarrow & 0.
\end{array}
\]
Theorem \ref{rigidity} shows that the right vertical map, induced by the
reduction map modulo $\pi$, is an isomorphism. The left vertical map is an
isomorphism by homotopy invariance of Witt-theory. It follows that
$W^{0}(\alpha)$ is an isomorphism, as claimed.

Because augmentation makes $\mathcal{F}(V)$ a retract of $\mathcal{F}(V\left[
t,t^{-1}\right]  )$, combining with the equivalence $\mathcal{F}%
(\alpha):\mathcal{F}(V[t,t^{-1}])\overset{\simeq}{\rightarrow}\mathcal{F}(F)$
makes $\mathcal{F}(V)$ also a retract of $\mathcal{F}(F)$. However, since
$\operatorname{char}(F)=0$ and \textit{$\mathrm{vcd}$}$_{2}(F)<\infty$, we
have $\mathcal{F}(F)\simeq\ast$, by \cite{HKO}; this now implies that
$\mathcal{F}(V)\simeq\ast$. Finally, by rigidity, $\mathcal{F}(V)\rightarrow
\mathcal{F}(k)$ is an equivalence. Hence, $\mathcal{F}(k)\simeq\ast$, as sought.
\end{proof}

\begin{Preliminaries}
\textbf{Bott elements}. \label{prelim:BottElms} Let $p=8m$ for $m\geq1$ an
integer, and let $\ell$ be the highest power of $2$ that divides $3^{4m}-1$;
that is, $\ell=\max\{2^{k}\,\vert\, 2^{k}\text{ divides }3^{4m}-1\}$. An
element $\beta\in\pi_{p}(S^{0};\,\mathbb{Z}/\ell)$ in the mod $\ell$ stable
stems is called a \emph{(positive) Bott element} if it maps to the reduction
mod $\ell$ of a generator of $KO_{p}\cong\mathbb{Z}$ under the unit map
$S^{0}\rightarrow KO$, where $KO$ denotes the Bott-periodic real topological
$K$-theory spectrum. We require Bott elements in our proof (Lemma
\ref{lem:etalHtpyLim}) of the \'{e}tale version of the Homotopy Fixed Point
Theorem \ref{thm:Z2htpylimit}. Bott elements for higher Grothendieck-Witt
theory and arbitrary coefficients were first constructed in \cite{BKO}. In
what follows we give a simpler construction that suffices for our purposes.
\end{Preliminaries}

\begin{lemma}
\label{lem:posBott} Let $\ell$ be as in Section \ref{prelim:BottElms}. Then
there is an element $\beta\in\pi_{p}(S^{0}/\ell)$ in the mod $\ell$ stable
stem whose image in $\pi_{p}(KO/\ell)$ under the unit map $S^{0}\rightarrow
KO$ is the reduction mod $\ell$ of a generator of $KO_{p}=\mathbb{Z}$.
\end{lemma}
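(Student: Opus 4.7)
The plan is to produce $\beta$ by lifting a suitably chosen image-of-$J$ class of order $\ell$ in $\pi_{p-1}^{s}$ through the Bockstein of the mod-$\ell$ Moore spectrum, then identifying its image in $\pi_{p}(KO/\ell)$ via the Adams $e$-invariant.

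First, recall that $\pi_{p}(KO)=KO_{8m}\cong\mathbb{Z}$ is generated by $b^{m}$, where $b\in KO_{8}$ is the Bott class, and that $\pi_{p-1}(KO)=0$. The universal coefficient sequence then gives an isomorphism $\pi_{p}(KO/\ell)\cong\pi_{p}(KO)/\ell\cong\mathbb{Z}/\ell$, generated by the reduction of $b^{m}$. Note also that $\psi^{3}$ acts on $\pi_{8k}(KO)$ by $3^{4k}$, so $\psi^{3}-1$ acts on $\pi_{p}(KO)$ by $3^{4m}-1$, which is divisible by $\ell$ by the very definition of $\ell$; this divisibility will govern the construction.

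Secondly, by Adams' analysis of the image of the $J$-homomorphism (together with Quillen's identification via $\psi^{3}$-operations), there exists an element $\alpha\in\pi_{p-1}^{s}$ whose $2$-primary order is $\ell$ and whose Adams $e$-invariant generates the cyclic subgroup $\tfrac{1}{\ell}\mathbb{Z}/\mathbb{Z}\subset\mathbb{Q}/\mathbb{Z}$. Choose a Bockstein lift $\beta\in\pi_{p}(S^{0}/\ell)$ of $\alpha$ along the surjection $\partial\colon\pi_{p}(S^{0}/\ell)\twoheadrightarrow\pi_{p-1}(S^{0})[\ell]$. Comparing the Bockstein squares for $S^{0}$ and $KO$ under the unit map, and using $\pi_{p-1}(KO)=0$, the image of $\beta$ in $\pi_{p}(KO/\ell)\cong\mathbb{Z}/\ell$ is exactly the class corresponding to $e(\alpha)$ under the identification $\mathbb{Z}/\ell=\tfrac{1}{\ell}\mathbb{Z}/\mathbb{Z}$. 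Since $e(\alpha)$ is a generator, so is the image of $\beta$, which is therefore a Bott element in the sense required.

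The main obstacle is the appeal to the $2$-primary computation of the image of $J$: one must verify that $\pi_{p-1}^{s}$ indeed contains an element of order at least $\ell=v_{2}(3^{4m}-1)$ whose $e$-invariant generates $\tfrac{1}{\ell}\mathbb{Z}/\mathbb{Z}$. An alternative and perhaps cleaner route is the $j$-spectrum approach: from the $2$-local fibre sequence $j\to ko\xrightarrow{\psi^{3}-1}ko$, the vanishing of $\psi^{3}-1$ on $\pi_{p}(ko/\ell)$ shows that the Bott generator lifts to $\pi_{p}(j/\ell)$, and the $2$-local surjectivity of $\pi_{*}(S^{0}/\ell)\to\pi_{*}(j/\ell)$ in positive degrees (i.e.\ image of $J$) then delivers $\beta\in\pi_{p}(S^{0}/\ell)$ lifting it, with the composite to $\pi_{p}(KO/\ell)$ hitting a generator by construction.
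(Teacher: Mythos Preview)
Your first approach is essentially the paper's proof. Both pick a generator of the $2$-primary image of $J$ in $\pi_{p-1}^{s}$ (cyclic of order $\ell$ by Adams), lift it through the Bockstein to $\pi_p(S^0/\ell)$, and use Quillen's injectivity of the $e$-invariant on $\mathrm{im}\,J$ to see that the resulting class hits a generator of $\pi_p(KO/\ell)\cong\mathbb{Z}/\ell$. The paper packages this via $KO_{\mathrm{tor}}$: it observes that $\pi_{p-1}(KO_{\mathrm{tor}})\cong\mathbb{Q}/\mathbb{Z}$ receives the $e$-invariant, and that the Bockstein $\pi_p(KO_{\mathrm{tor}}/\ell)\to\pi_{p-1}(KO_{\mathrm{tor}})[\ell]$ is an isomorphism, then argues that the composite $\mathbb{Z}/\ell\cong J(\pi_{p-1}O)_{(2)}\to\pi_p(S^0/\ell)\to\pi_p(KO/\ell)\cong\mathbb{Z}/\ell$ is injective, hence bijective. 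Your sentence ``comparing the Bockstein squares for $S^0$ and $KO$ and using $\pi_{p-1}(KO)=0$'' is a little compressed: the naive Bockstein square with target $\pi_{p-1}(KO)[\ell]=0$ gives nothing, and one really needs $KO_{\mathrm{tor}}$ (or an equivalent model of the $e$-invariant) to make the identification with $e(\alpha)$ precise. But the substance is the same.

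Your second route via the $2$-local fibre sequence $j\to ko\xrightarrow{\psi^3-1}ko$ is a genuinely different, clean alternative not in the paper. It replaces the Adams--Quillen $e$-invariant injectivity by the $2$-local splitting $S^0_{(2)}\to j_{(2)}$ (surjectivity on homotopy in positive degrees), and the numerical input becomes the vanishing of $\psi^3-1$ on $\pi_p(ko/\ell)$, which is immediate from $\ell\mid 3^{4m}-1$. This avoids the $KO_{\mathrm{tor}}$ diagram chase entirely, at the cost of invoking the image-of-$J$ splitting as a black box.
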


\begin{proof}
The construction of $\beta$, essentially due to Quillen, is based on Adams'
work on the image $J(\pi_{\ast}O)\subseteq\pi_{\ast}(S^{0})$ of the
$J$-homomorphism \cite{adams:ImJIV}. Recall that the $2$-primary part
$J(\pi_{p-1}O)_{(2)}$ of $J(\pi_{p-1}O)$ is cyclic of order $\ell$. For a
spectrum $F$, write $F_{\mathrm{tor}}$ for the homotopy fiber of the
rationalization map $F\rightarrow F_{\mathbb{Q}}$, and note that the natural
map $F_{\mathrm{tor}}/\ell\rightarrow F/\ell$ is an equivalence. Further,
recall that $\pi_{\ast}(S_{\mathrm{tor}}^{0})\rightarrow\pi_{\ast}(S^{0})$ is
an isomorphism for $\ast>0$. So, the $J$-homomorphism has image in $\pi_{\ast
}(S_{\mathrm{tor}}^{0})$. Consider the commutative diagram
\[
\xymatrix{
& \pi_p(S^0_{\mathrm{tor}}/\ell) \ar[r] \ar[d]^{\delta} & \pi_p(KO_{\mathrm{tor}}/\ell) \ar[d]^{\delta} \\
J(\pi_{p-1}O)_{(2)}\cong\Z/\ell \hspace{1ex}\ar@{^(->}[r] \ar[ur] &
\pi_{p-1}(S^0_{\mathrm{tor}}) \ar[r] & \pi_{p-1}(KO_{\mathrm{tor}})}
\]
in which the diagonal map exists because of the long exact sequence of
homotopy groups associated with the fibration $S_{\mathrm{tor}}^{0}%
\overset{\cdot\ell}{\rightarrow}S_{\mathrm{tor}}^{0}\rightarrow
S_{\mathrm{tor}}^{0}/\ell$. In \cite[p.~183, \S 2]{Quillen:letterToMilnor},
Quillen shows that the composition of the lower two maps is injective. It
follows that the composition of the diagonal map with the upper horizontal map
in the previous and in the following commutative diagram is injective
\[
\xymatrix{
& \pi_p(S^0_{\mathrm{tor}}/\ell) \ar[r] \ar[d]^{\cong} & \pi_p(KO_{\mathrm{tor}}/\ell)  \ar[d]^{\cong} \\
J(\pi_{p-1}O)_{(2)}\cong \Z/\ell \ar[r] \ar[ur] &
\pi_{p}(S^0/\ell) \ar[r] & \pi_{p}(KO/\ell) \cong \Z/\ell. }
\]
Therefore, the composition of the two lower horizontal maps in the last
diagram is an injection of finite groups of the same order. Hence, this
composition is an isomorphism. In particular, the map $\pi_{p}(S^{0}%
/\ell)\rightarrow\pi_{p}(KO/\ell)$ is surjective, and we can lift the
generator mod $\ell$ of $KO_{p}$ to an element $\beta\in\pi_{p}(S^{0}/\ell)$.
\end{proof}

\begin{lemma}
\label{lem:betaeta:nilpotent} Let $k$ be a separably closed field of
characteristic $\mathrm{char}(k)\neq2$, and let $\eta\in GW_{-1}%
^{[-1]}(k)\cong W^{0}(k)$ correspond to the unit of the ring $W^{0}(k)$. Then
$\beta\eta^{p}=0$ in $GW(k;\,\mathbb{Z}/\ell)$. In particular, $L/\ell
(k)[\beta^{-1}]\simeq\ast$.
\end{lemma}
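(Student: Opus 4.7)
The plan is to prove $\beta\eta^{p}=0$ by transferring the vanishing to topological $K$-theory, where the corresponding relation is classical, and then to deduce the contractibility of $L/\ell(k)[\beta^{-1}]$ formally.

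Since $k$ is separably closed of characteristic $\neq 2$, we have $k(\sqrt{-1})=k$, whose absolute Galois group is trivial, so $\operatorname{vcd}_{2}(k)=0<\infty$. Lemma~\ref{lem:HtpyLimitFields} therefore applies, giving an equivalence
\[
GW^{[n]}(k;\mathbb{Z}/\ell)\xrightarrow{\simeq} K^{[n]}(k;\mathbb{Z}/\ell)^{hC_{2}}
\]
for every shift $n$. It thus suffices to show that the image of $\beta\eta^{p}$ vanishes in $K(k;\mathbb{Z}/\ell)^{hC_{2}}$.

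I would then identify $K(k;\mathbb{Z}/\ell)^{hC_{2}}$, in the range of degrees of interest, with topological real $K$-theory $KO/\ell$. By Suslin's rigidity theorem together with Theorem~\ref{rigidity}, the mod-$\ell$ algebraic $K$-theory $K(k;\mathbb{Z}/\ell)$ of a separably closed field of characteristic $\neq 2$ agrees in positive degrees with topological complex $K$-theory $KU/\ell$, and the duality $C_{2}$-action corresponds to complex conjugation; thus the homotopy fixed points become $KO/\ell$ in the relevant range. Under this identification, the algebraic element $\eta$ maps to the topological Hopf element $\eta\in KO_{1}$, and by construction (Lemma~\ref{lem:posBott}) the Bott element $\beta$ maps to a generator of $KO_{p}/\ell$. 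The classical relation $\eta^{3}=0$ in $KO_{*}$ gives $\eta^{p}=0$ as $p\geq 8$, and so $\beta\eta^{p}=0$, both in $KO/\ell$ and, via the above reductions, in $GW(k;\mathbb{Z}/\ell)$.

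For the \emph{in particular} statement, push the identity $\beta\eta^{p}=0$ forward along $GW(k;\mathbb{Z}/\ell)\to L(k;\mathbb{Z}/\ell)$. By \ref{subsec:Prelim}(\ref{item3:subsec:Prelim}), $\eta$ is invertible in $L/\ell(k)$, so $\eta^{p}$ is a unit; the relation $\beta\eta^{p}=0$ then forces the image of $\beta$ in $\pi_{*}L/\ell(k)$ to be zero. Since $L/\ell(k)$ is a ring spectrum with unit, the self-map given by multiplication by $\beta$ is null-homotopic, and the telescope $L/\ell(k)[\beta^{-1}]$ is contractible.

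The main obstacle is the identification step above: one must verify that the Suslin-rigidity comparison sends algebraic $\eta$ to topological $\eta$ and that the duality $C_{2}$-action corresponds to complex conjugation on $KU/\ell$ at the spectrum level, not merely on homotopy groups. An alternative route, avoiding this topological identification, would be to argue directly via the homotopy fixed point spectral sequence for $K(k;\mathbb{Z}/\ell)^{hC_{2}}$, exploiting its sparse and computable structure for a separably closed field.
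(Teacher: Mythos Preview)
Your route is not circular (Lemma~\ref{lem:HtpyLimitFields} is established independently), but it is genuinely different from the paper's, and the gap you flag is real rather than cosmetic.

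The paper does not pass through the homotopy fixed point equivalence or Suslin rigidity at all. Instead it invokes Karoubi's theorem that for separably closed $k$ the map $GW/\ell(k)\to GW/\ell(k)[\beta^{-1}]\simeq KO/\ell$ is an equivalence on connective covers. Since $\beta\eta^{p}$ sits in degree~$0$, it suffices to check vanishing in $\pi_{0}(KO/\ell)$. There both $\beta$ and $\eta$ are reductions of integral classes, and the paper simply observes that the integral $\eta^{4}$ lies in the image of
\[
\mathbb{Z}/2\;\cong\;GW_{-4}^{[-4]}(\mathbb{C})=GW_{-4}^{[0]}(\mathbb{C})\;\longrightarrow\;KO_{-4}=\mathbb{Z},
\]
which is necessarily zero. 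No identification of $\eta$ with the Hopf map is required; the torsion argument does all the work.

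Your approach has two difficulties beyond the one you name. First, Suslin rigidity identifies $K/\ell(k)$ with \emph{connective} $ku/\ell$, so after taking homotopy fixed points you land in $(ku/\ell)^{hC_{2}}$ rather than $(KU/\ell)^{hC_{2}}\simeq KO/\ell$; these differ precisely in the negative degrees where the powers $\eta^{j}$ live, so invoking $KO_{3}=0$ is not yet justified. Second, you need Suslin's comparison to be $C_{2}$-equivariant and compatible with the bigraded $\eta$-multiplication across the shifts~$[n]$, which is exactly the compatibility you worry about. Karoubi's theorem packages all of this into a single citation, which is what the paper exploits. Your deduction of $L/\ell(k)[\beta^{-1}]\simeq\ast$ from $\beta\eta^{p}=0$ via $L=GW[\eta^{-1}]$ is the same as the paper's.
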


\begin{proof}
By \cite{Kar1983}, \cite{Kar1984} the ring spectrum $GW/\ell(k)[\beta^{-1}]$
is equivalent to $KO/\ell$ and the natural map $GW(k)/\ell\rightarrow
GW/\ell(k)[\beta^{-1}]$ is an equivalence on connective covers. Thus, it
suffices to check that $\beta\eta^{p}=0$ in $\pi_{0}(KO/\ell)$. In $KO/\ell$,
the elements $\beta$ and $\eta$ are reductions of integral classes. More
precisely, $\beta$ is the reduction mod $\ell$ of $b^{m}$, where $b\in
KO_{8}=\mathbb{Z}$ is a generator, and $\eta^{4}\in GW_{-4}^{[-4]}%
(\mathbb{C})$. However, the map
\[
\mathbb{Z}/2\cong GW_{-4}^{[-4]}(\mathbb{C})=GW_{-4}^{[0]}(\mathbb{C}%
)\longrightarrow KO_{-4}=\mathbb{Z}%
\]
is trivial. Consequently, $\eta^{p}=0$ in $KO_{-p}$ and hence $\beta\eta
^{p}=0$.

For the $L$-theory statement, recall that $L=GW[\eta^{-1}]$; see
(\ref{subsec:Prelim} (\ref{item3:subsec:Prelim})). Therefore, $\beta\eta
^{p}=0$ implies $L/\ell(k)[\beta^{-1}]\simeq\ast$.
\end{proof}

\begin{lemma}
\label{lem:etalHtpyLim} Let $\nu>0$ be an integer and $\ell=2^{\nu}$. Let $X$
be a QL scheme. Then the map
\[
GW^{\mathrm{\acute{e}t}}/\ell(X)\longrightarrow(K^{hC_{2}})^{\mathrm{\acute
{e}t}}/\ell(X)
\]
is an equivalence.
\end{lemma}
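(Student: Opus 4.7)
The plan is to show that the pointwise homotopy fibre $\mathcal{F}/\ell := \mathrm{hofib}(GW/\ell \to (K/\ell)^{hC_2})$, viewed as a presheaf of spectra on $X_{\mathrm{\acute{e}t}}$, has contractible étale fibrant replacement. Since Lemma~\ref{lem:propsFet}(1) guarantees that étale fibrant replacement preserves homotopy fibres, $\mathcal{F}^{\mathrm{\acute{e}t}}/\ell$ is the homotopy fibre of the map in the statement, so it suffices to prove $\mathcal{F}^{\mathrm{\acute{e}t}}/\ell(X) \simeq \ast$.

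The hypothesis $\mathrm{vcd}_2(X) < \infty$, which passes to every $U \in X_{\mathrm{\acute{e}t}}$, gives strong convergence of the étale descent spectral sequence
\[
E_2^{p,q} = H^p_{\mathrm{\acute{e}t}}\bigl(X,\, \underline{\pi_q(\mathcal{F}/\ell)}\bigr) \Longrightarrow \pi_{q-p}\,\mathcal{F}^{\mathrm{\acute{e}t}}/\ell(X),
\]
whose $E_2$-page vanishes beyond $p = \mathrm{vcd}_2(X)$. Thus the task reduces to verifying that every étale stalk of $\mathcal{F}/\ell$ is contractible. At a geometric point $x$ with separable residue field closure $\kappa := k(x)^{\mathrm{sep}}$, the stalk equals $\mathrm{hocolim}_{U \to x}\mathcal{F}/\ell(U)$. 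Since filtered hocolim commutes with finite homotopy fibres, this is $\mathrm{hofib}(A \to B)$, where $A = \mathrm{hocolim}_U GW/\ell(U)$ and $B = \mathrm{hocolim}_U (K/\ell(U))^{hC_2}$.

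Continuity of $GW/\ell$ and Rigidity (Theorem~\ref{rigidity}) applied to the strictly henselian $\mathcal{O}_{X,x}^{\mathrm{sh}}$ identify $A$ with $GW/\ell(\kappa)$. The main obstacle is the corresponding identification $B \simeq (K/\ell(\kappa))^{hC_2}$: homotopy fixed points are an infinite limit over $BC_2$, which has unbounded mod-$2$ cohomological dimension, so $(-)^{hC_2}$ does not commute with filtered colimits in general. This is precisely where the Bott element $\beta$ of Lemma~\ref{lem:posBott} enters. Using the relation $\beta\eta^p = 0$ from Lemma~\ref{lem:betaeta:nilpotent}, which forces $L/\ell(\kappa)[\beta^{-1}] \simeq \ast$, one reduces the identification of $B$ to a $\beta$-periodic topological comparison. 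Combined with the field case of the Homotopy Fixed Point Theorem (Lemma~\ref{lem:HtpyLimitFields}) applied to each finite separable extension of $k(x)$ entering the colimit system, one obtains $B \simeq (K/\ell(\kappa))^{hC_2}$ as the natural map.

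With both identifications in place, the stalk of $\mathcal{F}/\ell$ equals $\mathrm{hofib}(GW/\ell(\kappa) \to (K/\ell(\kappa))^{hC_2}) = \mathcal{F}/\ell(\kappa)$, which is contractible by Lemma~\ref{lem:HtpyLimitFields} since $\mathrm{vcd}_2(\kappa) = 0$. The descent spectral sequence then delivers $\mathcal{F}^{\mathrm{\acute{e}t}}/\ell(X) \simeq \ast$, completing the proof. The key technical point is the $\beta$-controlled identification of $B$; descent, rigidity, and the field case are already in hand or standard.
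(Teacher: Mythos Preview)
Your reduction to showing $(\mathcal{F}/\ell)^{\mathrm{\acute{e}t}}(X)\simeq\ast$ via contractibility of the \'{e}tale stalks of $\mathcal{F}/\ell$ is sound in outline, and in fact would not even require the descent spectral sequence: if every stalk of $\mathcal{F}/\ell$ is contractible then $\mathcal{F}/\ell\to\ast$ is a local weak equivalence, hence $(\mathcal{F}/\ell)^{\mathrm{\acute{e}t}}\simeq\ast$ directly. (Your side remark that $E_{2}^{p,q}$ vanishes beyond $p=\mathrm{vcd}_{2}(X)$ is not justified anyway: the relevant bound is $\mathrm{cd}_{2}$, which can be infinite even when $\mathrm{vcd}_{2}<\infty$, as for $\operatorname{Spec}\mathbb{R}$.)

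The real gap is the step you yourself flag as the ``main obstacle'' and then do not resolve. Your identification $B\simeq(K/\ell(\kappa))^{hC_{2}}$ is asserted via a ``$\beta$-periodic topological comparison'' and an appeal to Lemma~\ref{lem:HtpyLimitFields} ``applied to each finite separable extension of $k(x)$ entering the colimit system''. But the colimit defining the stalk runs over \'{e}tale neighbourhoods $U\to X$, which are \emph{schemes}; Lemma~\ref{lem:HtpyLimitFields} is only for fields and says nothing about $\mathcal{F}/\ell(U)$ for such $U$. Knowing $\mathcal{F}/\ell(U)\simeq\ast$ for all $U$ would already be the full Homotopy Fixed Point Theorem~\ref{thm:Z2htpylimit}, which is what Lemma~\ref{lem:etalHtpyLim} is meant to feed into. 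Nor is it explained how $\beta\eta^{p}=0$ over the separably closed field $\kappa$ helps identify a colimit of $(K/\ell(U))^{hC_{2}}$ taken over non-local $U$. As written, this paragraph restates the difficulty rather than overcoming it.

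The paper's proof bypasses the stalk of $(K/\ell)^{hC_{2}}$ entirely. It Bott-inverts the homotopy cartesian square \ref{subsec:Prelim}(\ref{item:Fperiodic}) and applies $(-)^{\mathrm{\acute{e}t}}$ (using Lemma~\ref{lem:propsFet}(1)); by Lemma~\ref{lem:betaeta:nilpotent} the corners $(L/\ell[\beta^{-1}])^{\mathrm{\acute{e}t}}$ and $(\hat{H}/\ell[\beta^{-1}])^{\mathrm{\acute{e}t}}$ vanish, so the Bott-inverted comparison $(GW/\ell[\beta^{-1}])^{\mathrm{\acute{e}t}}\to(K/\ell^{hC_{2}}[\beta^{-1}])^{\mathrm{\acute{e}t}}$ is an equivalence. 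Then Lemma~\ref{lem:propsFet}(2), together with the \emph{pointwise} $K$-theoretic Quillen--Lichtenbaum estimate on $K/\ell^{hC_{2}}\to K/\ell^{hC_{2}}[\beta^{-1}]$, shows that both $(GW/\ell)^{\mathrm{\acute{e}t}}$ and $(K/\ell^{hC_{2}})^{\mathrm{\acute{e}t}}$ agree with their Bott-inverted versions on high connected covers; periodicity of the fibre finishes the argument. The Bott element is used to control connectivity of maps of \emph{presheaves}, never to compute an individual stalk of $(K/\ell)^{hC_{2}}$.
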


\begin{proof}
For an integer $\nu>0$, a map of spectra is an equivalence mod $2^{\nu}$ if
and only if it is an equivalence mod $2$. Therefore, we can assume $\ell$ to
be as in Section \ref{prelim:BottElms}, and we have Bott elements at our
disposal. Consider the commutative diagram
\[%
\begin{array}
[c]{ccc}%
(GW/\ell)^{\mathrm{\acute{e}t}}(X)\longrightarrow & (GW/\ell\lbrack\beta
^{-1}])^{\mathrm{\acute{e}t}}(X)\longrightarrow & (L/\ell\lbrack\beta
^{-1}])^{\mathrm{\acute{e}t}}(X)\\
\downarrow & \downarrow & \downarrow\\
(K/\ell^{hC_{2}})^{\mathrm{\acute{e}t}}(X)\longrightarrow & (K/\ell^{hC_{2}%
}[\beta^{-1}])^{\mathrm{\acute{e}t}}(X)\longrightarrow & (\hat{H}/\ell
\lbrack\beta^{-1}])^{\mathrm{\acute{e}t}}(X)
\end{array}
\]
in which the right-hand square is obtained from the homotopy cartesian square
(\ref{subsec:Prelim} (\ref{item:Fperiodic})) by reduction mod $\ell$,
inverting the positive Bott element constructed in Lemma \ref{lem:posBott},
and taking \'{e}tale globally fibrant replacements. All these operations
preserve (pointwise) homotopy cartesian squares. Thus, the right-hand square
in the diagram is homotopy cartesian. By Lemma \ref{lem:betaeta:nilpotent},
Theorem \ref{rigidity} and Subsection \ref{subsec:Prelim}
(\ref{item:HcartSqare}), the upper right corner of the diagram is trivial.
Since $\hat{H}/\ell\lbrack\beta^{-1}]$ is a module spectrum over
$L/\ell\lbrack\beta^{-1}]$, the lower right corner of the diagram is trivial
as well. Hence, the middle vertical arrow is an equivalence. In view of Lemma
\ref{lem:propsFet} and Theorem \ref{rigidity}, the upper left horizontal arrow
is an equivalence on connective covers since $GW/\ell(F)\rightarrow
GW/\ell(F)[\beta^{-1}]$ has this property for separably closed fields $F$. By
Lemma \ref{lem:propsFet}, the lower left horizontal map is an equivalence on
some connected cover, because, by the solution of the $K$-theoretic
Quillen-Lichtenbaum conjecture \cite{Thomason-descent} in the generality of
\cite{RO05}, the map $K/\ell^{hC_{2} }\rightarrow K/\ell^{hC_{2}}[\beta^{-1}]$
is a pointwise (hence local) weak equivalence on (\textit{$\mathrm{vcd}$}%
$_{2}(X)-2$)-connected covers. Hence, the fiber of the left vertical map has
trivial homotopy groups in high degrees. By periodicity (\ref{subsec:Prelim}
(\ref{item3:subsec:Prelim})), we are done.
\end{proof}

\begin{lemma}
\label{lem:QLFieldsToSchemes} If Theorem \ref{thm:QL} holds for the residue
fields of a QL scheme $X$, then the map (\ref{eqn:QL}) is $n$-coconnected for
some integer $n$.
\end{lemma}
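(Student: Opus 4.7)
The plan is to work with the homotopy-fibre presheaf
$\mathcal{F} = \mathrm{hofib}\bigl(GW^{[n]}/\ell(-,\mathcal{L}) \to (GW^{[n]}/\ell)^{\mathrm{\acute{e}t}}(-,\mathcal{L})\bigr)$
on $X_{\mathrm{\acute{e}t}}$ and to bound the homotopy groups of $\mathcal{F}(X)$ from above by means of the Nisnevich descent spectral sequence. First I would verify that $\mathcal{F}$ satisfies Nisnevich descent: the source does by the Mayer--Vietoris property of Grothendieck--Witt theory \cite{Sch10(myMV)}, and the target does a fortiori since it satisfies \'{e}tale descent by construction; the homotopy fibre of a map of Nisnevich-local presheaves is again Nisnevich-local.

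Next I would identify the Nisnevich stalks of $\mathcal{F}$. Fix a point $x \in X$, let $R = \mathcal{O}_{X,x}^{h}$ be the henselian local ring at $x$ and $k = k(x)$ its residue field. By rigidity (Theorem \ref{rigidity}), the reduction $R\to k$ induces $GW^{[n]}/\ell(R) \simeq GW^{[n]}/\ell(k)$. The same reduction induces an equivalence of small \'{e}tale sites $R_{\mathrm{\acute{e}t}} \simeq k_{\mathrm{\acute{e}t}}$ (lifting of finite \'{e}tale algebras), so globally \'{e}tale-fibrant replacements transport across this equivalence and yield
$(GW^{[n]}/\ell)^{\mathrm{\acute{e}t}}(R) \simeq (GW^{[n]}/\ell)^{\mathrm{\acute{e}t}}(k).$
Combining these, $\mathcal{F}_{x} \simeq \mathcal{F}(k)$. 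By the standing hypothesis, Theorem \ref{thm:QL} holds for $k$, so $\pi_{i}\mathcal{F}(k) = 0$ for $i \geq \mathrm{vcd}_{2}(k)-2$; since $\mathrm{vcd}_{2}(k(x)) \leq \mathrm{vcd}_{2}(X) < \infty$, this vanishing is uniform in $x \in X$.

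Finally I would apply the strongly convergent Nisnevich hyperdescent spectral sequence
\[
E_{2}^{p,q} = H^{p}_{\mathrm{Nis}}\bigl(X,\, \pi_{-q}^{\mathrm{sh}} \mathcal{F}\bigr) \Longrightarrow \pi_{-p-q}\bigl(\mathcal{F}(X)\bigr),
\]
whose strong convergence follows from $\dim X \leq \mathrm{vcd}_{2}(X) < \infty$, just as in the proof of Theorem \ref{thm:KQL}. The Nisnevich cohomological dimension bound $H^{p}_{\mathrm{Nis}}(X,-) = 0$ for $p > \dim X$, combined with the stalkwise vanishing $\pi_{-q}^{\mathrm{sh}}\mathcal{F} = 0$ for $-q \geq \mathrm{vcd}_{2}(X)-2$, confines the nonzero $E_{2}$-terms to a rectangular region. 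Reading off the induced filtration forces $\pi_{i}\mathcal{F}(X) = 0$ for all $i$ exceeding some explicit integer $n$ (one could take $n = \mathrm{vcd}_{2}(X)-3$). In the paper's terminology, this is precisely the $n$-connectivity of the map (\ref{eqn:QL}).

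The main obstacle is the stalkwise identification in the second step: one must be careful to justify that the Nisnevich stalk of the globally \'{e}tale-fibrant presheaf $(GW^{[n]}/\ell)^{\mathrm{\acute{e}t}}$ really does agree with its value at the henselian local ring $R$, and then with $(GW^{[n]}/\ell)^{\mathrm{\acute{e}t}}(k)$ via the equivalence $R_{\mathrm{\acute{e}t}} \simeq k_{\mathrm{\acute{e}t}}$. This requires continuity of \'{e}tale fibrant replacement under the essentially \'{e}tale filtered limit presenting $\mathrm{Spec}(R)$ by its Nisnevich neighbourhoods --- a standard but foundational point that needs to be handled carefully, and which can be found in Jardine's treatment of presheaves of spectra invoked in \ref{subsec:PshSpXet}.
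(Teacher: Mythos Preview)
Your argument is essentially the paper's: both identify Nisnevich stalks via rigidity (Theorem \ref{rigidity}) and then feed the hypothesis on residue fields into the strongly convergent Nisnevich descent spectral sequence, using $H^{p}_{\mathrm{Nis}}(X,-)=0$ for $p>\dim X$. The only difference is cosmetic---you run the spectral sequence for the homotopy fibre, whereas the paper compares the $E_{2}$-pages of the spectral sequences for source and target---except for one technical point the paper handles more carefully: connective $GW/\ell$ does not literally satisfy Nisnevich descent (only $\mathbb{G}W/\ell$ does, with $GW\to\mathbb{G}W$ an equivalence on connective covers), so your appeal to Mayer--Vietoris for the source should go through $\mathbb{G}W$; since the lemma only asks for $n$-connectedness for \emph{some} $n$, this does not affect your conclusion.
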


\begin{proof}
By definition, $GW^{\mathrm{\acute{e}t}}/\ell$ satisfies Nisnevich descent. In
positive degrees the same holds for $GW/\ell$ \cite{Sch(myderived)}. More
precisely, $\mathbb{G}W$ satisfies Nisnevich descent \cite[Theorem
9.7]{Sch(myderived)} and the map $GW\rightarrow\mathbb{G}W$ is an equivalence
on connective covers \cite[Proposition 8.7 or Theorem 8.14]{Sch(myderived)}.
The map between the $E^{2}$-pages of the corresponding Nisnevich descent
spectral sequences takes the form
\[
H_{\mathrm{Nis}}^{p}(X;\,\tilde{\pi}_{q}(GW/\ell))\longrightarrow
H_{\mathrm{Nis}}^{p}(X;\,\tilde{\pi}_{q}(GW^{\mathrm{\acute{e}t}}%
/\ell))\text{.}%
\]
By rigidity, see Theorem \ref{rigidity}, the assumption shows that the
canonically induced map
\[
\tilde{\pi}_{q}(GW/\ell)\longrightarrow\tilde{\pi}_{q}(GW^{\mathrm{\acute{e}%
t}}/\ell)
\]
of Nisnevich sheaves is an isomorphism for $q\geq\mathrm{vcd}_{2}(X)-1$. The
result follows from the fact that $H_{\mathrm{Nis}}^{p}(X,A)=0$ for
$p>\operatorname{dim}X$ and $p<0$ and strong convergence of the descent
spectral sequences \cite[Theorem 7.58]{Jardineetale}.
\end{proof}

\noindent\textbf{Proofs of Theorems \ref{thm:Z2htpylimit} and \ref{thm:QL}.\ }
Consider the commutative diagram:
\begin{equation}%
\begin{array}
[c]{ccc}%
GW/\ell(X) & \longrightarrow & GW^{\mathrm{\acute{e}t}}/\ell(X)\\
\downarrow &  & \downarrow\\
\left[  K/\ell^{hC_{2}}\right]  (X) & \longrightarrow & \left[  K/\ell
^{hC_{2}}\right]  ^{\mathrm{\acute{e}t}}(X).
\end{array}
\label{eqn:diag1}%
\end{equation}
By the solution of the $K$-theoretic Quillen-Lichtenbaum conjecture (Theorem
\ref{thm:KQL}), the homotopy fiber of the lower horizontal map is
($\mathrm{vcd}_{2}(X)-2$)-coconnected. Lemma \ref{lem:etalHtpyLim} shows the
right vertical map is an equivalence, while Lemma \ref{lem:HtpyLimitFields}
shows the left vertical map is an equivalence for fields. This implies the
Hermitian Quillen-Lichtenbaum Theorem \ref{thm:QL} for fields. Using Lemma
\ref{lem:QLFieldsToSchemes}, we have that the upper horizontal map is an
isomorphism in high degrees. It follows that the homotopy fiber of the left
vertical map in (\ref{eqn:diag1}) is trivial in high degrees. By periodicity
(\ref{subsec:Prelim} (\ref{item3:subsec:Prelim})), the homotopy fiber has
trivial homotopy in all degrees. Thus the left vertical map in
(\ref{eqn:diag1}) is an equivalence. This proves the Homotopy Fixed Point
Theorem \ref{thm:Z2htpylimit}. Since both vertical maps are equivalences, the
homotopy fiber of the upper horizontal map has trivial homotopy groups in the
same range as the homotopy fiber of the lower horizontal map. This proves the
Hermitian Quillen-Lichtenbaum Theorem \ref{thm:QL} for schemes $X$.
\hfill$\Box\medskip$ \vspace{1ex}

\noindent\textbf{Proof of Theorem \ref{thm:IntegralHtpyLimit}.} By Lemma
\ref{W2tors}, we have $2^{m}W^{0}(X)=0$ for some $m>0$. The homotopy groups of
$L^{\left[  n\right]  }(X)$ and the Tate spectrum $\hat{H}(C_{2},\,K^{\left[
n\right]  }(X))$ acquire compatible actions by $W^{0}$; see \cite{WW00},
\cite[Remark 7.7]{Sch(myderived)}. It follows that the homotopy groups of the
fiber $\mathcal{F}(X)$ of $L^{\left[  n\right]  }(X)\rightarrow\hat{H}%
(C_{2},\,K^{\left[  n\right]  }(X))$ also admit such an action, and are
therefore annihilated by $2^{m}$. However, by the Homotopy Fixed Point Theorem
\ref{thm:Z2htpylimit}, the homotopy cofiber of multiplication by $2^{m}%
\colon\mathcal{F}(X)\rightarrow\mathcal{F}(X)$ is trivial; that is,
multiplication by $2^{m}$ is an isomorphism on the homotopy groups of
$\mathcal{F}(X)$. As we have just noticed, this is the zero map. Hence,
$\mathcal{F}(X)\simeq\ast$, and the map (\ref{WilliamsMap}) is an equivalence.
\hfill$\Box\smallskip$ \vspace{1ex}

\noindent\textbf{Proof of Corollary \ref{cor:LthTate}.} This follows from
Theorems \ref{thm:Z2htpylimit} and \ref{thm:IntegralHtpyLimit} in view of the
homotopy cartesian square \ref{subsec:Prelim} (2). \hfill$\Box\smallskip$
\vspace{1ex}

\noindent\textbf{Proof of Corollary \ref{rem:ConnectedVsNonconnected}.}
\label{pf:ConnectedVsNonconnected} This follows from the fact that the
diagram
\[%
\begin{array}
[c]{ccc}%
GW(X) & \longrightarrow & \mathbb{G}W(X)\\
\downarrow &  & \downarrow\\
K(X)^{hC_{2}} & \longrightarrow & \mathbb{K}(X)^{hC_{2}}%
\end{array}
\]
is a homotopy cartesian square; see \cite[Theorem 8.14]{Sch(myderived)}%
.\hfill$\Box\smallskip$ \vspace{1ex}

If $\mathcal{L}=\mathcal{O}_{X}$ then the converse of the Integral Homotopy
Fixed Point Theorem \ref{thm:IntegralHtpyLimit} holds.

\begin{proposition}
\label{prop:IntegralConverse} Let $X$ be a scheme with an ample family of line
bundles and $\frac{1}{2}\in\Gamma(X,\mathcal{O}_{X})$. If the map
(\ref{WilliamsMap}) is an equivalence for $\mathcal{L}=\mathcal{O}_{X}$ then
no residue field of $X$ is formally real. More generally, this conclusion
holds if we assume only that the map (\ref{WilliamsMap}) is an equivalence
modulo some odd prime.
\end{proposition}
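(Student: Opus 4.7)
The plan is to argue the contrapositive: assuming that some residue field $k(x)$ of $X$ is formally real, I will show that the map (\ref{WilliamsMap}) for $\mathcal{L}=\mathcal{O}_{X}$ cannot be an equivalence modulo any odd prime $p$; in particular it is not an integral equivalence.

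First, I reduce the homotopy cartesian square (\ref{subsec:Prelim} (\ref{item:Fperiodic})) modulo $p$. If (\ref{WilliamsMap}) is a mod-$p$ equivalence, then the right-hand vertical
\[
L^{[n]}(X,\mathcal{O}_{X})/p \longrightarrow \hat{H}(C_{2},\,K^{[n]}(X,\mathcal{O}_{X}))/p
\]
is also an equivalence. The key general input here is that $|C_{2}|=2$ acts as zero on $\hat{H}(C_{2},Y)$, because multiplication by $|G|$ on $Y^{hG}$ factors through the norm map $Y_{hG}\to Y^{hG}$; hence $\hat{H}(C_{2},Y)[\tfrac{1}{2}]\simeq\ast$, and in particular $\hat{H}(C_{2},K(X))/p\simeq\ast$ for $p$ odd. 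Combining, $L^{[n]}(X,\mathcal{O}_{X})/p\simeq\ast$. Since $\pi_{i}L^{[n]}(X,\mathcal{O}_{X})\cong W^{n-i}(X)$ by Preliminaries \ref{subsec:Prelim}(\ref{item:HcartSqare}), specialising to $i=n$ gives that the classical Witt group $W(X)=W^{0}(X)$ is uniquely $p$-divisible; equivalently $pW(X)=W(X)$.

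Next, I exhibit a surjection $W(X)\twoheadrightarrow\mathbb{Z}$ from the formally real residue field. Pulling back along $\mathrm{Spec}\,k(x)\to X$ yields a natural homomorphism $\rho\colon W(X)\to W(k(x))$ (base change of non-degenerate symmetric bilinear forms on vector bundles is well-defined because $\tfrac{1}{2}\in\Gamma(X,\mathcal{O}_{X})$), which sends the unit form $\langle 1\rangle_{X}$ to $\langle 1\rangle_{k(x)}$. By Artin--Schreier, the formally real field $k(x)$ admits an ordering, and the associated signature $\sigma\colon W(k(x))\to\mathbb{Z}$ sends $\langle 1\rangle_{k(x)}$ to $+1$. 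The composite $\sigma\circ\rho\colon W(X)\to\mathbb{Z}$ therefore hits the generator $1$, so is surjective.

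The contradiction now appears: $pW(X)=W(X)$ forces $p\mathbb{Z}=\sigma\circ\rho(pW(X))=\sigma\circ\rho(W(X))=\mathbb{Z}$, which is impossible for an odd prime $p$. No step seems to present a serious obstacle: the Tate-vanishing input $\hat{H}(C_{2},-)[\tfrac{1}{2}]\simeq\ast$ is classical, the existence of a signature comes from Artin--Schreier theory, and compatibility of Witt groups with pullback along a point of $X$ is elementary. The one place to stay alert is ensuring that $\rho$ really sends the unit form to the unit form, but this is immediate from the definition of base change on rank-one free modules.
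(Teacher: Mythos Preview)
Your proof is correct and follows essentially the same route as the paper's: both use the homotopy cartesian square \ref{subsec:Prelim}(\ref{item:Fperiodic}) reduced mod an odd prime $p$, the vanishing of the Tate spectrum after inverting $2$, and a map from $W(X)$ to $\mathbb{Z}$ coming from a formally real residue field to derive a contradiction. The only cosmetic difference is that the paper phrases the last step via ring maps $\mathbb{Z}[\tfrac{1}{p}]\to W(X)\to W(k)\to W(\bar{k})=\mathbb{Z}$ through a real closure $\bar{k}$, whereas you use a signature from an ordering directly; these are equivalent formulations.
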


\begin{proof}
For any prime $q$, the map $GW^{\left[  n\right]  }/q(X)\rightarrow(K^{\left[
n\right]  }/q(X))^{hC_{2}}$ is an equivalence if the map (\ref{WilliamsMap})
is an integral equivalence. If $q$ is odd, then by (\ref{subsec:Prelim}
(\ref{item:Fperiodic})), the map $L^{[n]}/q(X)\rightarrow\hat{H}%
(C_{2},\,K^{[n]}/q(X))$ is an equivalence. Now multiplication by $2$ is an
equivalence on $K^{[n]}/q$, which implies that $\hat{H}(C_{2},\,K^{[n]}%
/q(X))\simeq\ast$, and hence $L^{[n]}/q(X)\simeq\ast$. Therefore, the Witt
ring $W(X)$ is a $\mathbb{Z}[\frac{1}{q}]$-algebra, since multiplication by
$q$ on $W(X)=L_{n}^{[n]}(X)$ is an isomorphism. If $X$ has a formally real
residue field $k$, then we obtain ring maps $\mathbb{Z}[\frac{1}%
{q}]\rightarrow W(X)\rightarrow W(k)\rightarrow W(\bar{k})=\mathbb{Z}$, where
$\bar{k}$ is a real closure of $k$, which leads to a contradiction.
\end{proof}

\begin{remark}
\label{Remark 3.5 11430} We should point out the necessity of our standing
assumption that $\frac{1}{2}\in\Gamma(X,\mathcal{O}_{X})$, although the cited
results in \cite{Sch10(myMV)} are proved in greater generality. Without this
assumption, the Homotopy Fixed Point Theorem \ref{thm:Z2htpylimit} cannot hold
for the following reason. As proved in \cite[\S \ 2]{Sch(myderived)}, the
fundamental theorem in Hermitian $K$-theory \cite{Kar1980} fails for the
$GW^{[n]}$-spectrum whenever $X$ has a residue field of characteristic $2$,
whereas it does hold for the $(K^{[n]})^{hC_{2}}$-spectrum; see the proof of
\cite[Theorem 6.2]{Sch(myderived)}. In particular, if $X$ has a residue field
of characteristic $2$ then (\ref{WilliamsMap}) is not an integral equivalence,
in general, even if $\operatorname{vcd}_{2}(X)<\infty$. Moreover, it is not a
$2$-adic equivalence for fields of characteristic $2$ (in this case the fiber
of (\ref{WilliamsMap}) is $2$-adically complete).

If $K$-theory of symmetric bilinear forms (that is, $GW$-spectra) is replaced
with $K$-theory of quadratic forms, then (\ref{WilliamsMap}) is not an
equivalence either, because the latter is not homotopy invariant for regular
rings, whereas $K$-theory and its homotopy fixed points are. In particular,
the quadratic analog of the map (\ref{WilliamsMap}) is not generally a
$2$-adic equivalence in characteristic $2$.
\end{remark}

\begin{remark}
\label{rem:oddprime} The odd-primary analog of the Hermitian
Quillen-Lichtenbaum Theorem \ref{thm:QL} can be read off from the isomorphisms
\cite[Remark 7.8]{Sch(myderived)}
\[
GW_{i}^{[n]}(X,\mathcal{L})\otimes\mathbb{Z}[1/2]\cong\left[  K_{i}%
^{[n]}(X,\mathcal{L})^{C_{2}}\otimes\mathbb{Z}[1/2]\right]  \oplus\left[
W^{n-i}(X)\otimes\mathbb{Z}[1/2]\right]  .
\]
Here, the $K$-summand may be computed by \'{e}tale techniques thanks to the
solution of the Bloch-Kato conjecture by Voevodsky, Rost, and others. Also,
$W^{r}$ denotes Balmer's Witt groups, which coincide up to $2$-torsion with
the higher Witt groups defined in \cite{Kar1980} (for affine schemes). On the
other hand, the odd-primary analog of the Homotopy Fixed Point Theorem is
false in general, even when $X$ is a $QL$ scheme: see Proposition
\ref{prop:IntegralConverse}.
\end{remark}

\begin{remark}
As for algebraic $K$-theory \cite{TT90}, higher Grothendieck-Witt theory can
be defined via perfect complexes instead of vector bundles. The techniques
employed in \cite{TT90} apply to $GW$-theory of perfect complexes along the
lines of \cite{Sch(myderived)}. With this definition, the results in this
section should remain valid without the \textquotedblleft ample family of line
bundles\textquotedblright\ assumption used in the definition of a $QL$ scheme.
To make this precise, one needs to display a strictly functorial model for
$GW$-theory of perfect complexes.
\end{remark}

\medskip

\section{Applications\label{Section 3: Applications}}



\begin{theorem}
\label{thm:GWalgKOtop} Let $X$ be a complex algebraic variety of (complex)
dimension $d$ which has an ample family of line bundles. Let $X_{\mathbb{C}}$
be the associated analytic topological space of complex points. Then for
$\ell=2^{\nu}>1$ and $n\in\mathbb{Z}$, the canonical map
\[
GW_{i}^{[n]}(X;\,\mathbb{Z}/\ell)\longrightarrow KO^{2n-i}(X_{\mathbb{C}%
};\,\mathbb{Z}/\ell)
\]
is an isomorphism for $i\geq d-1$ and a monomorphism for $i=d-2$.
\end{theorem}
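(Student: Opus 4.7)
The plan is to factor the canonical comparison map through étale $GW$-theory:
\[
GW^{[n]}_i(X;\,\mathbb{Z}/\ell) \xrightarrow{\ \alpha\ } GW^{[n]}_i(X_{\mathrm{\acute{e}t}};\,\mathbb{Z}/\ell) \xrightarrow{\ \beta\ } KO^{2n-i}(X_\mathbb{C};\,\mathbb{Z}/\ell),
\]
and to prove separately that $\alpha$ is $(d-2)$-connected and that $\beta$ is an isomorphism. For $\alpha$, I would first bound $\mathrm{vcd}_2(X)\leq d$: each residue field $k(x)$ is finitely generated of transcendence degree at most $d$ over the algebraically closed field $\mathbb{C}$, hence $\mathrm{cd}_2(k(x))\leq d$. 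The Hermitian Quillen--Lichtenbaum Theorem \ref{thm:QL} then delivers the $(d-2)$-connectivity of $\alpha$, which in turn gives the asserted isomorphism range $i\geq d-1$ and the monomorphism at $i=d-2$.

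To handle $\beta$, I would first invoke Lemma \ref{lem:etalHtpyLim} to replace the étale $GW$-theory with the étale homotopy fixed points of $K$-theory:
\[
GW^{[n]}/\ell(X_{\mathrm{\acute{e}t}}) \simeq (K^{[n]})^{hC_2}/\ell(X_{\mathrm{\acute{e}t}}).
\]
The remaining task is to identify the right-hand side with $KO^{2n-\ast}(X_\mathbb{C};\,\mathbb{Z}/\ell)$. For this I would appeal to the classical comparison of étale and topological $K$-theory for complex varieties: Suslin rigidity together with étale descent produces an equivalence of presheaves of spectra on $X_{\mathrm{\acute{e}t}}$,
\[
K/\ell(X_{\mathrm{\acute{e}t}}) \overset{\simeq}{\longrightarrow} KU^{\mathrm{top}}(X_\mathbb{C};\,\mathbb{Z}/\ell),
\]
and this comparison should intertwine the duality $C_2$-action on the algebraic side with the complex-conjugation $C_2$-action on the topological side. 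Taking $hC_2$-fixed points and applying Atiyah's identification $(KU^{\mathrm{top}})^{hC_2}\simeq KO^{\mathrm{top}}$, together with the grading convention whereby the algebraic duality shift $[n]$ corresponds to the topological shift by $2n$, yields the desired equivalence.

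The main obstacle lies in this second half of the argument, where two subtleties must be confronted. First, one needs that the globally fibrant (étale) replacement is sufficiently compatible with $(-)^{hC_2}$ to allow the identification of $(K^{[n]})^{hC_2}/\ell(X_{\mathrm{\acute{e}t}})$ with $\bigl(K/\ell(X_{\mathrm{\acute{e}t}})\bigr)^{hC_2}$; this is where the finiteness of $\mathrm{vcd}_2(X)$ is essential, as it guarantees strong convergence of the relevant descent spectral sequences, exactly as in the proof of Lemma \ref{lem:etalHtpyLim}. Second, one must carefully track the $C_2$-equivariance across the étale-to-topological comparison and verify the numerology of the grading, so that the algebraic shift $[n]$ lands in cohomological degree $2n-i$ on the topological side.
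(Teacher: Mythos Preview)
Your approach is viable in outline but takes a more circuitous route than the paper's, and the second half contains an obstacle you correctly flag but do not actually close.

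The paper bypasses \'etale $GW$-theory entirely. It uses the commutative square
\[
\begin{array}{ccc}
GW^{[n]}(X;\,\mathbb{Z}/\ell) & \longrightarrow & GW_{\mathrm{top}}^{[n]}(X_{\mathbb{C}};\,\mathbb{Z}/\ell)\\
\downarrow &  & \downarrow\\
K^{[n]}(X;\,\mathbb{Z}/\ell)^{hC_{2}} & \longrightarrow & KU^{[n]}(X_{\mathbb{C}};\,\mathbb{Z}/\ell)^{hC_{2}}.
\end{array}
\]
The left vertical map is an equivalence by the \emph{Integral} Homotopy Fixed Point Theorem~\ref{thm:IntegralHtpyLimit} (applicable because $X$ is defined over $\mathbb{C}$, so no residue field is formally real), the right vertical map is the classical equivalence $KO\simeq KU^{hC_2}$, and the lower horizontal map is $(d-2)$-connected by Voevodsky's direct comparison of algebraic and topological $K$-theory with finite coefficients. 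No \'etale intermediary appears. Thus the paper invokes Theorem~\ref{thm:IntegralHtpyLimit} rather than Theorem~\ref{thm:QL}, and Voevodsky's algebraic-to-topological theorem rather than an \'etale-to-topological comparison.

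Your route through $GW^{\mathrm{\acute{e}t}}$ and Lemma~\ref{lem:etalHtpyLim} lands you at $(K^{hC_2})^{\mathrm{\acute{e}t}}/\ell(X)$, and you then need to identify this with $\bigl(K^{\mathrm{\acute{e}t}}/\ell(X)\bigr)^{hC_2}$ before you can transport across the \'etale-to-analytic comparison. You acknowledge this, but note that the proof of Lemma~\ref{lem:etalHtpyLim} does \emph{not} supply this commutation: that argument proceeds by inverting a Bott element and exploiting periodicity, not by a descent-spectral-sequence comparison of the sort you sketch. The commutation of $(-)^{hC_2}$ with \'etale fibrant replacement can be established for a finite group and finite $\mathrm{vcd}_2$, but it is genuine additional work; likewise the $C_2$-equivariance of the \'etale-to-analytic $K$-theory comparison and the resulting grading match need to be written down carefully. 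The paper's square trades all of these issues for a single appeal to Voevodsky's theorem on the bottom row.
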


\begin{proof}
The theories $GW^{[n]}$ have Bott-periodic topological counterparts
$GW_{\mathrm{top}}^{[n]}$ first explored in \cite{Karoubi:batelle} as
\[
\renewcommand\arraystretch{1.5}%
\begin{array}
[c]{ll}%
GW_{\mathrm{top}}^{[0]}(X_{\mathbb{C}})={_{1}\mathcal{L}}(X_{\mathbb{C}%
})=KO(X_{\mathbb{C}}), & GW_{\mathrm{top}}^{[-1]}(X_{\mathbb{C}}%
)={_{1}\mathcal{U}}(X_{\mathbb{C}})=\Omega^{2}KO(X_{\mathbb{C}}),\\
GW_{\mathrm{top}}^{[-2]}(X_{\mathbb{C}})={_{-1}\mathcal{L}}(X_{\mathbb{C}%
})=\Omega^{4}KO(X_{\mathbb{C}}), & GW_{\mathrm{top}}^{[-3]}(X_{\mathbb{C}%
})={_{-1}\mathcal{U}}(X_{\mathbb{C}})=\Omega^{6}KO(X_{\mathbb{C}}),
\end{array}
\]
which induce the maps
\[
GW^{[-n]}(X;\,\mathbb{Z}/\ell)\longrightarrow GW_{\mathrm{top}}^{[-n]}%
(X_{\mathbb{C}};\,\mathbb{Z}/\ell)=\Omega^{2n}KO(X_{\mathbb{C}};\,\mathbb{Z}%
/\ell)
\]
in the theorem. In the commutative diagram
\[%
\begin{array}
[c]{ccc}%
GW^{[n]}(X;\,\mathbb{Z}/\ell) & \longrightarrow & GW_{\mathrm{top}}%
^{[n]}(X_{\mathbb{C}};\,\mathbb{Z}/\ell)\\
\downarrow &  & \downarrow\\
\left[  K^{[n]}(X;\,\mathbb{Z}/\ell)\right]  ^{hC_{2}} & \longrightarrow &
\left[  KU^{[n]}(X_{\mathbb{C}};\,\mathbb{Z}/\ell)\right]  ^{hC_{2}},
\end{array}
\]
the lower horizontal map is $(d-2)$-coconnected, by a theorem of Voevodsky
\cite[Theorem 7.10]{VV}. By Theorem \ref{thm:IntegralHtpyLimit}, the left
vertical map is an (integral) equivalence. Finally, it is a classical theorem
that the right vertical map is also an (integral) equivalence. Indeed, for
$n=0$, this is the usual homotopy equivalence $KO\simeq KU^{hC_{2}}$ (see
\textsl{e.g.} \cite{Karoubidescent2001}); and for other $n\in\mathbb{Z}$, it
follows from the topological version of the fundamental theorem in Hermitian
$K$-theory \cite{Karoubi:batelle}.
\end{proof}

\begin{remark}
The proof shows that the theorem also holds for odd prime powers. We simply
need to remark that the map $K^{[n]}(X;\,\mathbb{Z}/\ell)\rightarrow
KU^{[n]}(X_{\mathbb{C}};\,\mathbb{Z}/\ell)$ is also ($d-2$)-coconnected for
odd prime powers $\ell$ due to the solution of the Bloch-Kato conjecture by
Voevodsky, Rost, Suslin and others. However, the odd prime analog of Theorem
\ref{thm:GWalgKOtop} can be more easily proved using Remark \ref{rem:oddprime}
in place of the Integral Homotopy Fixed Point Theorem. See also \cite{BKO} for
another argument in that case.
\end{remark}

\medskip

From now on, let $\ell$ again be a power of $2$. As a second application, we
give new and more conceptual proofs of the main results of \cite{BK} and
\cite{BKO1}. Let $\mathbb{Z}^{\prime}$ be short for $\mathbb{Z}[\frac{1}{2}]$.
Because $K^{\left[  n\right]  }$ has the same (nonequivariant) homotopy type
as $K$, from \cite{RW}, \cite[Corollary 8]{weibel:CRAS} we have the existence
of a homotopy cartesian square
\[%
\begin{array}
[c]{ccc}%
K^{\left[  n\right]  }(\mathbb{Z}^{\prime})/\ell & \longrightarrow &
K_{\mathrm{top}}^{\left[  n\right]  }(\mathbb{R})/\ell\\
\downarrow &  & \downarrow\\
K^{\left[  n\right]  }(\mathbb{F}_{3})/\ell & \longrightarrow &
K_{\mathrm{top}}^{\left[  n\right]  }(\mathbb{C})/\ell,
\end{array}
\]
where $K_{\mathrm{top}}$ stands for connective topological $K$-theory. Now,
since the fixed spectrum of $K^{\left[  n\right]  }$ is $GW^{\left[  n\right]
}$, the Homotopy Fixed Point Theorem \ref{thm:Z2htpylimit} applied to this
square yields the following.

\begin{theorem}
\label{thm:GW htpy cartesian square} For $\ell=2^{\nu}>1$ and $n\in\mathbb{Z}%
$, the square
\[%
\begin{array}
[c]{ccc}%
GW^{\left[  n\right]  }(\mathbb{Z}^{\prime})/\ell & \longrightarrow &
GW_{\mathrm{top}}^{\left[  n\right]  }(\mathbb{R})/\ell\\
\downarrow &  & \downarrow\\
GW^{\left[  n\right]  }(\mathbb{F}_{3})/\ell & \longrightarrow &
GW_{\mathrm{top}}^{\left[  n\right]  }(\mathbb{C})/\ell
\end{array}
\]
is homotopy cartesian on connective covers.\hfill$\Box$
\end{theorem}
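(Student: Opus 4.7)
The plan is to apply the $C_2$-homotopy fixed point functor $(-)^{hC_2}$ to the given homotopy cartesian square in $K$-theory and then identify each of the four resulting vertices. Because $(-)^{hC_2}$ is a homotopy limit, it preserves homotopy cartesian squares, so applying it to the $K$-theory square produces a homotopy cartesian square whose vertices are $[K^{[n]}(R)/\ell]^{hC_2}$ or $[K_{\mathrm{top}}^{[n]}(R)/\ell]^{hC_2}$ for the four rings $R$ appearing.

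The two algebraic vertices identify with $GW^{[n]}(\mathbb{Z}')/\ell$ and $GW^{[n]}(\mathbb{F}_3)/\ell$ via direct application of the Homotopy Fixed Point Theorem \ref{thm:Z2htpylimit}: both $\mathrm{Spec}(\mathbb{Z}')$ and $\mathrm{Spec}(\mathbb{F}_3)$ are affine noetherian schemes of finite Krull dimension with $\frac{1}{2}$ invertible, carry an ample family of line bundles (being affine), and have finite virtual mod-$2$ cohomological dimension (equal to $2$ and $1$ respectively). For the topological vertices, the classical identification $KO\simeq KU^{hC_2}$ (with the complex-conjugation action), together with the topological fundamental theorem in hermitian $K$-theory \cite{Karoubi:batelle} to handle the shift parameter $n$, identifies $[K_{\mathrm{top}}^{[n]}(\mathbb{C})/\ell]^{hC_2}$ with $GW_{\mathrm{top}}^{[n]}(\mathbb{C})/\ell$ on connective covers, and the analogous classical computation identifies $[K_{\mathrm{top}}^{[n]}(\mathbb{R})/\ell]^{hC_2}$ with $GW_{\mathrm{top}}^{[n]}(\mathbb{R})/\ell$ on connective covers.

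The restriction to connective covers in the conclusion is forced by these topological identifications: while the map of the Homotopy Fixed Point Theorem is a global equivalence, the classical topological identifications $KO\simeq KU^{hC_2}$ and its shifted variants match the (non-connective) topological $GW$-spectra with $(-)^{hC_2}$ of the (non-connective) periodic $KU$ spectrum, so to use the cited connective $K$-theory square of Rognes-Weibel one must truncate. I expect the only nontrivial point to be verifying that the $C_2$-action on $K_{\mathrm{top}}^{[n]}(\mathbb{R})$ and $K_{\mathrm{top}}^{[n]}(\mathbb{C})$ induced from duality in hermitian $K$-theory coincides, up to equivalence, with the standard complex-conjugation action appearing in $KO\simeq KU^{hC_2}$; once this compatibility of actions is in place, the theorem follows by piecing together the identifications at each vertex.
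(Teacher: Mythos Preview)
Your proposal is correct and follows essentially the same approach as the paper: apply $(-)^{hC_2}$ to the Rognes--Weibel $K$-theory square and invoke the Homotopy Fixed Point Theorem \ref{thm:Z2htpylimit} at the vertices. The paper's argument is a single sentence preceding the theorem, and you have simply filled in the details --- in particular your separate treatment of the topological corners via the classical equivalence $KO\simeq KU^{hC_2}$ (and its shifted versions from \cite{Karoubi:batelle}) is exactly how the paper handles the analogous identification in the proof of Theorem~\ref{thm:GWalgKOtop}.
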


\begin{remark}
According to \cite{Suslin}, these results do not depend on whether the fields
$\mathbb{R}$ and $\mathbb{C}$ are taken with the discrete or standard
Euclidean topology.
\end{remark}

\medskip

This theorem enables complete computation of the groups $GW^{\left[  n\right]
}(\mathbb{Z}^{\prime})$, up to finite groups of odd order (see \cite{BK}). In
particular, if $n=0$, the right vertical map in the above square can be
identified with the split surjective map $KO\times KO\rightarrow KO$ mod
$\ell$. Therefore, using $2$-adic completions we get the following corollary.

\begin{corollary}
For $i\geq0$ and any one-point space $\mathrm{pt}$, the natural map
\[
GW_{i}^{[0]}(\mathbb{Z}^{\prime})\longrightarrow GW_{i}^{[0]}(\mathbb{F}%
_{3})\oplus KO^{-i}(\mathrm{pt})
\]
is an isomorphism modulo finite groups of odd order.\hfill$\Box$
\end{corollary}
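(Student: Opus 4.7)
The plan is to apply Theorem \ref{thm:GW htpy cartesian square} at $n=0$ and exploit the splitting of the right vertical map. As noted in the paragraph preceding the corollary, the right vertical map is identified with the split surjection $(KO\times KO)/\ell \to KO/\ell$, and the square is homotopy cartesian on connective covers. Since pulling the section of the right vertical map back through a homotopy cartesian square produces a section of the left vertical map (the pullback of a split map splits), I obtain an equivalence of connective spectra
\[
GW^{[0]}(\mathbb{Z}')/\ell \;\simeq\; GW^{[0]}(\mathbb{F}_{3})/\ell \;\vee\; KO/\ell,
\]
and the splitting is realised on mod-$\ell$ homotopy groups in nonnegative degrees by the natural map of the corollary.

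Next, I pass to the inverse limit over $\nu$ with $\ell=2^{\nu}$. A Milnor $\varprojlim^{1}$-argument (with Mittag--Leffler guaranteed by finite generation of all homotopy groups in play) identifies this limit with the $2$-adic completion, yielding
\[
GW_{i}^{[0]}(\mathbb{Z}')\otimes\mathbb{Z}_{2} \;\cong\; \bigl(GW_{i}^{[0]}(\mathbb{F}_{3})\otimes\mathbb{Z}_{2}\bigr) \;\oplus\; \bigl(KO^{-i}(\mathrm{pt})\otimes\mathbb{Z}_{2}\bigr),
\]
realised by the natural map of the corollary. Hence that map is a $2$-adic isomorphism.

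To upgrade to the integral statement modulo finite groups of odd order, I invoke finite generation of the three abelian groups in play: $GW_{i}^{[0]}(\mathbb{F}_{3})$ and $GW_{i}^{[0]}(\mathbb{Z}')$ are finitely generated via Quillen's/Borel's finite generation for $K_{*}(\mathbb{F}_{3})$ and $K_{*}(\mathbb{Z}')$, finite generation of the associated Witt groups, and the fibration \ref{subsec:Prelim}(\ref{item:HcartSqare}); and $KO^{-i}(\mathrm{pt})$ is classical. A map of finitely generated abelian groups whose tensor with $\mathbb{Z}_{2}$ is an isomorphism has kernel and cokernel with no $\mathbb{Z}$-summand and no $2$-primary torsion, hence finite of odd order.

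The main obstacle is the first step: verifying that the right vertical map really is the split surjection $(KO\times KO)/\ell\to KO/\ell$ and, more delicately, that the chosen section is compatible with the cartesian-square structure so that the induced section on the left column actually realises the direct-sum decomposition asserted in the corollary. Once this compatibility is in hand, the rest is routine homotopical bookkeeping; the only other subtlety is the commutation of the splitting with $2$-adic completion, which is precisely the finite-generation input above.
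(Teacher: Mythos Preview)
Your proposal is correct and follows essentially the same approach as the paper: the paper's proof is the single sentence ``Therefore, using $2$-adic completions we get the following corollary,'' preceded by the identification of the right vertical map with the split surjection $(KO\times KO)/\ell \to KO/\ell$, and you have simply spelled out what that sentence entails (pullback of the section through the homotopy cartesian square, passage to $2$-adic completion via $\varprojlim$ and Mittag--Leffler, and the finite-generation argument to descend to an integral statement modulo odd torsion). The only point you flag as an obstacle---the identification of the right vertical map and compatibility of the section with the natural map---is exactly what the paper also takes for granted, so there is no divergence in strategy.
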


The groups $KO^{-i}(\mathrm{pt})$ are given by Bott periodicity, and the
groups $GW_{i}^{[0]}(\mathbb{F}_{3})$ were computed by Friedlander
\cite{Friedlander}. \vspace{1ex}

Similarly, let $F$ be a number field and $\mathcal{O}_{F}^{\prime}%
=\mathcal{O}_{F}[\frac{1}{2}]$ be its ring of $2$-integers. Assume that $F$ is
a $2$-regular totally real number field with $r$ real embeddings. Let $q$ be a
prime number such that the elements corresponding to the Adams operations
$\psi^{-1}$ and $\psi^{q}$ in the ring of operations of the periodic complex
topological $K$-theory spectrum generate the Galois group $F(\mu_{2^{\infty}%
})$ over $F$, where $F(\mu_{2^{\infty}})$ is obtained from $F$ by adjoining
all $2$-primary roots of unity. From \cite{HO} and \cite{Mitchell} we have a
homotopy cartesian square of connective spectra
\[%
\begin{array}
[c]{ccc}%
K^{\left[  n\right]  }(\mathcal{O}_{F}^{\prime})/\ell & \longrightarrow &
K_{\mathrm{top}}^{\left[  n\right]  }(\mathbb{R})^{r}/\ell\\
\downarrow &  & \downarrow\\
K^{\left[  n\right]  }(\mathbb{F}_{q})/\ell & \longrightarrow &
K_{\mathrm{top}}^{\left[  n\right]  }(\mathbb{C})^{r}/\ell.
\end{array}
\]
After application of the functor $(-)^{hC_{2}}$ to this homotopy cartesian
square, the Homotopy Fixed Point Theorem \ref{thm:Z2htpylimit} implies the
following result, which was first proved in \cite{BKO1} and which allows us to
compute completely the groups $GW_{i}^{\left[  n\right]  }(\mathcal{O}%
_{F}^{\prime})$ up to finite groups of odd order.

\begin{theorem}
\label{thm:ComputationOF} Let $\ell=2^{\nu}>1$ and $n\in\mathbb{Z}$. For a
$2$-regular totally real number field $F$ with $r$ real embeddings, the square
of spectra
\[%
\begin{array}
[c]{ccc}%
GW^{\left[  n\right]  }(\mathcal{O}_{F}^{\prime})/\ell & \longrightarrow &
GW_{\mathrm{top}}^{\left[  n\right]  }(\mathbb{R})^{r}/\ell\\
\downarrow &  & \downarrow\\
GW^{\left[  n\right]  }(\mathbb{F}_{q})/\ell & \longrightarrow &
GW_{\mathrm{top}}^{\left[  n\right]  }(\mathbb{C})^{r}/\ell
\end{array}
\]
is homotopy cartesian on connective covers. In particular, for $i\geq0$, $n=0$
and any one-point space $\mathrm{pt}$, the natural map
\[
GW_{i}^{[0]}(\mathcal{O}_{F}^{\prime})\longrightarrow GW_{i}^{[0]}%
(\mathbb{F}_{q})\oplus KO^{-i}(\mathrm{pt})^{r}.
\]
is an isomorphism modulo finite groups of odd order. \hfill$\Box$
\end{theorem}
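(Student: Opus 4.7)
The plan is to apply the functor $(-)^{hC_{2}}$ to the homotopy cartesian $K$-theory square of Hesselholt-{\O}stv{\ae}r and Mitchell displayed immediately before the theorem, and then to invoke the Homotopy Fixed Point Theorem \ref{thm:Z2htpylimit} to replace each corner by the corresponding Grothendieck--Witt spectrum.

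First, because $(-)^{hC_{2}}$ is a right adjoint it preserves homotopy cartesian squares of spectra, so applying it to the $K$-theory square yields a homotopy cartesian square. The ring $\mathcal{O}_{F}^{\prime}$ is affine, regular of Krull dimension one, has $\frac{1}{2}$ invertible, and its residue fields (either finite or equal to $F$) have finite mod-$2$ virtual cohomological dimension, so Theorem \ref{thm:Z2htpylimit} yields
\[
\left[K^{[n]}(\mathcal{O}_{F}^{\prime})/\ell\right]^{hC_{2}} \simeq GW^{[n]}(\mathcal{O}_{F}^{\prime})/\ell
\]
on connective covers, and similarly for $\mathbb{F}_{q}$. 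For the topological corners, the classical identification $KU^{hC_{2}} \simeq KO$ (complex conjugation action), combined with the topological version of the fundamental theorem in hermitian $K$-theory \cite{Karoubi:batelle}, identifies the right-hand entries with $GW_{\mathrm{top}}^{[n]}(\mathbb{R})^{r}/\ell$ and $GW_{\mathrm{top}}^{[n]}(\mathbb{C})^{r}/\ell$. This gives the homotopy cartesian square of the theorem on connective covers.

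For the \emph{In particular} clause with $n = 0$, I would argue as in the corollary following Theorem \ref{thm:GW htpy cartesian square}. Each of the $r$ real embeddings contributes a factor whose right vertical map can be identified, modulo $\ell$, with the split surjection $KO \times KO \to KO$; taking the $r$-fold product, the right vertical map of the square is split surjective on connective covers. The associated Mayer--Vietoris long exact sequence therefore degenerates into short split exact sequences in each homotopy degree, producing a splitting
\[
GW_{i}^{[0]}(\mathcal{O}_{F}^{\prime})/\ell \cong GW_{i}^{[0]}(\mathbb{F}_{q})/\ell \oplus KO^{-i}(\mathrm{pt})^{r}/\ell
\]
for $i \geq 0$. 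Assembling the $2$-adic information over all $\nu$ and combining with the rational (odd-primary) statement of Remark \ref{rem:oddprime} yields the asserted isomorphism modulo finite groups of odd order. The main obstacle is the careful handling of the topological corners: checking that the right vertical map is genuinely split surjective after $(-)^{hC_{2}}$ in each relevant degree, and bookkeeping to pass cleanly from mod-$\ell$ information to the integral statement modulo odd torsion without introducing additional error terms.
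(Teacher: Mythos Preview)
Your proposal is correct and follows essentially the same approach as the paper: apply $(-)^{hC_{2}}$ to the $K$-theoretic homotopy cartesian square, then invoke Theorem~\ref{thm:Z2htpylimit} (together with the topological identification $KU^{hC_{2}}\simeq KO$) to replace each corner by the corresponding $GW$-spectrum, and for the ``In particular'' clause argue exactly as in the corollary following Theorem~\ref{thm:GW htpy cartesian square}. One small correction: the $K$-theory square is due to Hodgkin--{\O}stv{\ae}r \cite{HO} and Mitchell \cite{Mitchell}, not Hesselholt--{\O}stv{\ae}r.
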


\begin{remark}
Let $X$ be a $QL$ scheme. Our results also give the isomorphism
\begin{equation}
GW/\ell(X)[\beta^{-1}]\overset{\cong}{\longrightarrow}GW^{\mathrm{\acute{e}t}%
}/\ell(X)[\beta^{-1}]\label{eqn:GWBottinverted}%
\end{equation}
which was first proved in \cite{BKO}. Note that $GW^{\mathrm{\acute{e}t}}%
/\ell(X)\rightarrow GW^{\mathrm{\acute{e}t}}/\ell(X)[\beta^{-1}]$ is an
equivalence on connective covers. By the Homotopy Fixed Point Theorem
\ref{thm:Z2htpylimit}, cup product with the Bott element is an isomorphism in
high degrees, as the same is true for $K$-theory. However, this is also true
for \'{e}tale Hermitian $K$-theory, since by the Hermitian Quillen-Lichtenbaum
Theorem \ref{thm:QL} it coincides with Hermitian $K$-theory in high degrees.
Hence the equivalence (\ref{eqn:GWBottinverted}).
\end{remark}

\medskip

Let $A\bullet B$ denote an abelian group extension of $B$ by $A$, so that
there exists a short exact sequence
\[
0\rightarrow A\rightarrow A\bullet B\rightarrow B\rightarrow0.
\]
Another convention we follow is that $\mu_{2^{\nu}}^{\otimes i}$ denotes the
$i\,$th Tate twist of the sheaf of $2^{\nu}$\thinspace th roots of unity
$\mu_{2^{\nu}}$ (the kernel of multiplication by $2^{\nu}$ on the
multiplicative group scheme $\mathbb{G}_{m}$ over $\mathcal{O}_{F}^{\prime})$.
At one extreme, when $\nu=1$ this is independent of the Tate twist; at the
other, we use finiteness of the \'{e}tale cohomology groups of $\mathcal{O}%
_{F}^{\prime}$ to write $\mathbb{Z}_{2}^{\otimes j}$ for $\underleftarrow
{\mathrm{lim}}\,\mu_{2^{\nu}}^{\otimes j}$. \vspace{1ex}

By combining the Hermitian Quillen-Lichtenbaum Theorem \ref{thm:QL} with
\cite[Lemmas 6.12, 6.16]{BKO} we deduce our third computational application
for higher Grothendieck-Witt groups.

\begin{theorem}
\label{thm:2cd2computation} Suppose that $F$ is a totally imaginary number
field. The $2$-adically completed higher Grothendieck-Witt groups
$GW_{i}^{\left[  n\right]  }(\mathcal{O}_{F}^{\prime})_{\#}$ of the ring of
$2$-integers $\mathcal{O}_{F}^{\prime}$ of $F$ are computed in terms of
\'{e}tale cohomology groups as follows.\begin{table}[tbh]
\begin{center}%
\begin{tabular}
[c]{p{0.5in}|p{1.9in}|p{1.9in}|}\hline
$i\bmod8$ & $GW_{i}^{\left[  0\right]  }(\mathcal{O}_{F}^{\prime})_{\#}$ &
$GW_{i}^{\left[  1\right]  }(\mathcal{O}_{F}^{\prime})_{\#}$\\\hline
$8k>0$ & $H_{\mathrm{\acute{e}t}}^{2}(\mathcal{O}_{F}^{\prime},\mu_{2})\bullet
H_{\mathrm{\acute{e}t}}^{1}(\mathcal{O}_{F}^{\prime},\mu_{2})$ &
$H_{\mathrm{\acute{e}t}}^{2}(\mathcal{O}_{F}^{\prime},\mu_{2})\bullet
H_{\mathrm{\acute{e}t}}^{1}(\mathcal{O}_{F}^{\prime},\mathbb{Z}_{2}%
^{\otimes4k+1})$\\
$8k+1$ & $H_{\mathrm{\acute{e}t}}^{1}(\mathcal{O}_{F}^{\prime},\mu_{2})\bullet
H_{\mathrm{\acute{e}t}}^{0}(\mathcal{O}_{F}^{\prime},\mu_{2})$ &
$H_{\mathrm{\acute{e}t}}^{2}(\mathcal{O}_{F}^{\prime},\mu_{2})\bullet
H_{\mathrm{\acute{e}t}}^{1}(\mathcal{O}_{F}^{\prime},\mu_{2})$\\
$8k+2$ & $H_{\mathrm{\acute{e}t}}^{2}(\mathcal{O}_{F}^{\prime},\mathbb{Z}%
_{2}^{\otimes4k+2})\bullet H_{\mathrm{\acute{e}t}}^{0}(\mathcal{O}_{F}%
^{\prime},\mu_{2})$ & $H_{\mathrm{\acute{e}t}}^{1}(\mathcal{O}_{F}^{\prime
},\mu_{2})\bullet H_{\mathrm{\acute{e}t}}^{0}(\mathcal{O}_{F}^{\prime},\mu
_{2})$\\
$8k+3$ & $H_{\mathrm{\acute{e}t}}^{1}(\mathcal{O}_{F}^{\prime},\mathbb{Z}%
_{2}^{\otimes4k+2})$ & $H_{\mathrm{\acute{e}t}}^{2}(\mathcal{O}_{F}^{\prime
},\mathbb{Z}_{2}^{\otimes4k+3})\bullet H_{\mathrm{\acute{e}t}}^{0}%
(\mathcal{O}_{F}^{\prime},\mu_{2})$\\
$8k+4$ & $0$ & $H_{\mathrm{\acute{e}t}}^{1}(\mathcal{O}_{F}^{\prime
},\mathbb{Z}_{2}^{\otimes4k+3})$\\
$8k+5$ & $0$ & $0$\\
$8k+6$ & $H_{\mathrm{\acute{e}t}}^{2}(\mathcal{O}_{F}^{\prime},\mathbb{Z}%
_{2}^{\otimes4k+4})$ & $0$\\
$8k+7$ & $H_{\mathrm{\acute{e}t}}^{2}(\mathcal{O}_{F}^{\prime},\mu_{2})\bullet
H_{\mathrm{\acute{e}t}}^{1}(\mathcal{O}_{F}^{\prime},\mathbb{Z}_{2}%
^{\otimes4k+4})$ & $H_{\mathrm{\acute{e}t}}^{2}(\mathcal{O}_{F}^{\prime
},\mathbb{Z}_{2}^{\otimes4k+5})$\\\hline
\end{tabular}
\end{center}
\end{table}\begin{table}[tbh]
\begin{center}%
\begin{tabular}
[c]{p{0.5in}|p{1.9in}|p{1.9in}|}\hline
$i\bmod8$ & $GW_{i}^{\left[  2\right]  }(\mathcal{O}_{F}^{\prime})_{\#}$ &
$GW_{i}^{\left[  3\right]  }(\mathcal{O}_{F}^{\prime})_{\#}$\\\hline
$8k>0$ & $0$ & $H_{\mathrm{\acute{e}t}}^{1}(\mathcal{O}_{F}^{\prime
},\mathbb{Z}_{2}^{\otimes4k+1})$\\
$8k+1$ & $0$ & $0$\\
$8k+2$ & $H_{\mathrm{\acute{e}t}}^{2}(\mathcal{O}_{F}^{\prime},\mathbb{Z}%
_{2}^{\otimes4k+2})$ & $0$\\
$8k+3$ & $H_{\mathrm{\acute{e}t}}^{2}(\mathcal{O}_{F}^{\prime},\mu_{2})\bullet
H_{\mathrm{\acute{e}t}}^{1}(\mathcal{O}_{F}^{\prime},\mathbb{Z}_{2}%
^{\otimes4k+2})$ & $H_{\mathrm{\acute{e}t}}^{2}(\mathcal{O}_{F}^{\prime
},\mathbb{Z}_{2}^{\otimes4k+3})$\\
$8k+4$ & $H_{\mathrm{\acute{e}t}}^{2}(\mathcal{O}_{F}^{\prime},\mu_{2})\bullet
H_{\mathrm{\acute{e}t}}^{1}(\mathcal{O}_{F}^{\prime},\mu_{2})$ &
$H_{\mathrm{\acute{e}t}}^{2}(\mathcal{O}_{F}^{\prime},\mu_{2})\bullet
H_{\mathrm{\acute{e}t}}^{1}(\mathcal{O}_{F}^{\prime},\mathbb{Z}_{2}%
^{\otimes4k+3})$\\
$8k+5$ & $H_{\mathrm{\acute{e}t}}^{1}(\mathcal{O}_{F}^{\prime},\mu_{2})\bullet
H_{\mathrm{\acute{e}t}}^{0}(\mathcal{O}_{F}^{\prime},\mu_{2})$ &
$H_{\mathrm{\acute{e}t}}^{2}(\mathcal{O}_{F}^{\prime},\mu_{2})\bullet
H_{\mathrm{\acute{e}t}}^{1}(\mathcal{O}_{F}^{\prime},\mu_{2})$\\
$8k+6$ & $H_{\mathrm{\acute{e}t}}^{2}(\mathcal{O}_{F}^{\prime},\mathbb{Z}%
_{2}^{\otimes4k+4})\bullet H_{\mathrm{\acute{e}t}}^{0}(\mathcal{O}_{F}%
^{\prime},\mu_{2})$ & $H_{\mathrm{\acute{e}t}}^{1}(\mathcal{O}_{F}^{\prime
},\mu_{2})\bullet H_{\mathrm{\acute{e}t}}^{0}(\mathcal{O}_{F}^{\prime},\mu
_{2})$\\
$8k+7$ & $H_{\mathrm{\acute{e}t}}^{1}(\mathcal{O}_{F}^{\prime},\mathbb{Z}%
_{2}^{\otimes4k+4})$ & $H_{\mathrm{\acute{e}t}}^{2}(\mathcal{O}_{F}^{\prime
},\mathbb{Z}_{2}^{\otimes4k+5})\bullet H_{\mathrm{\acute{e}t}}^{0}%
(\mathcal{O}_{F}^{\prime},\mu_{2})$\\\hline
\end{tabular}
\end{center}
\end{table}\newline
\newpage

In particular, for $0\leq n\leq3$ and $i>0$, the group $GW_{i}^{\left[
n\right]  }(\mathcal{O}_{F}^{\prime})_{\#}$ is trivial when%
\[
\left\lfloor \frac{i-n}{2}\right\rfloor \equiv\left\lfloor \frac{4+n}%
{2}\right\rfloor \;(\mathrm{mod}\;4)\text{.}\
\vspace{-18pt}%
\]
\hfill$\Box$
\end{theorem}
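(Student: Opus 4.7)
The plan is to combine the Hermitian Quillen--Lichtenbaum Theorem \ref{thm:QL} with the étale descent calculations of \cite[Lemmas 6.12 and 6.16]{BKO}. To begin, I would check that $\operatorname{vcd}_2(\mathcal{O}_F') = 2$: since $F$ is totally imaginary, $\operatorname{cd}_2(F) = \operatorname{vcd}_2(F) = 2$ by class field theory, and the closed residue fields of $\mathcal{O}_F'$ are finite fields of odd characteristic, so $\operatorname{vcd}_2 \leq 1$ there. Theorem \ref{thm:QL} then gives an isomorphism
\[
GW^{[n]}_i(\mathcal{O}_F'; \mathbb{Z}/2^\nu) \xrightarrow{\cong} GW^{[n]}_i((\mathcal{O}_F')_{\mathrm{\acute{e}t}}; \mathbb{Z}/2^\nu) \qquad (i > 0),
\]
reducing the problem to an étale computation.

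On the étale side I would run the descent spectral sequence
\[
E_2^{p,q} = H^p_{\mathrm{\acute{e}t}}(\mathcal{O}_F', \tilde\pi_q(GW^{[n]}/2^\nu)) \Longrightarrow GW^{[n]}_{q-p}((\mathcal{O}_F')_{\mathrm{\acute{e}t}}; \mathbb{Z}/2^\nu),
\]
which by the vcd bound is concentrated in the strip $0 \leq p \leq 2$. The content of \cite[Lemmas 6.12, 6.16]{BKO} is the identification of the coefficient sheaves $\tilde\pi_q(GW^{[n]}/2^\nu)$ with appropriate Tate twists $\mu_{2^\nu}^{\otimes j}$ (with $j$ determined by $n$ and $q \bmod 4$), together with the resolution of the associated extension problems. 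The $\bullet$-extensions in the tables record the filtration
\[
0 \to E_\infty^{1,i+1} \to GW^{[n]}_i((\mathcal{O}_F')_{\mathrm{\acute{e}t}}; \mathbb{Z}/2^\nu) \to E_\infty^{0,i} \to 0,
\]
with a further $E_\infty^{2,i+2}$-factor joined on when it is nontrivial. Passage to the $2$-adic limit over $\nu$ is harmless: the groups $H^p_{\mathrm{\acute{e}t}}(\mathcal{O}_F', \mu_{2^\nu}^{\otimes j})$ are finite, so Mittag--Leffler gives $\varprojlim_\nu H^p_{\mathrm{\acute{e}t}}(\mathcal{O}_F', \mu_{2^\nu}^{\otimes j}) = H^p_{\mathrm{\acute{e}t}}(\mathcal{O}_F', \mathbb{Z}_2^{\otimes j})$ with no $\varprojlim^{\!1}$ correction, producing the table entries.

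The triviality criterion is then a combinatorial check. For each $n \in \{0,1,2,3\}$, the sheaves $\tilde\pi_q(GW^{[n]}/2^\nu)$ vanish in one specific residue class of $q \bmod 4$ --- namely the slot which, over a separably closed field, is occupied only by the Witt summand $W^r$ with $r \not\equiv 0 \pmod 4$, which vanishes by the rigidity discussion in Section \ref{subsec:Prelim}. A row of the table is identically zero precisely when all three slots $E_\infty^{p,i+p}$ ($p = 0, 1, 2$) lie in this vanishing regime, which occurs in consecutive pairs of $i$-values; inspection of the four tables confirms that these pairs are exactly those picked out by $\lfloor (i-n)/2 \rfloor \equiv \lfloor (4+n)/2 \rfloor \pmod 4$.

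The main technical obstacle lies not in the argument above but in the imported input from \cite[Lemmas 6.12, 6.16]{BKO}: identifying the local étale homotopy sheaves of $GW^{[n]}/2^\nu$ with the correct Tate twists of $\mu_{2^\nu}$ and settling the additive extensions. Once those lemmas are in hand, the present theorem is a routine assembly of the spectral sequence data with Theorem \ref{thm:QL}, together with the bookkeeping needed for the triviality congruence.
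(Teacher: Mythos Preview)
Your approach is essentially the same as the paper's: the authors state only that the result follows by combining the Hermitian Quillen--Lichtenbaum Theorem \ref{thm:QL} with \cite[Lemmas 6.12, 6.16]{BKO}, and you have correctly unpacked what that combination entails (checking $\mathrm{vcd}_2(\mathcal{O}_F')=2$, invoking Theorem \ref{thm:QL} to reduce to the \'etale side, and then reading off the answer from the \'etale descent spectral sequence computations already carried out in \cite{BKO}). Your sketch of the triviality congruence as a bookkeeping check on the vanishing rows of the tables is also in the spirit of the paper, which simply records the pattern without further comment.
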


\medskip

Comparing the above with \cite[Theorem 0.4]{RW}, one sees that the cohomology
terms involving twisted $\mathbb{Z}_{2}$-coefficients are detected by the
$K$-groups of $\mathcal{O}_{F}^{\prime}$.

\medskip The Lichtenbaum conjectures relate the orders of $K$-groups to values
of Dedekind zeta-functions of totally real number fields \cite{L1}, \cite{L2}.
We exhibit precise formulas relating the orders of higher Grothendieck-Witt
groups to values of Dedekind zeta-functions. If $m$ is even, let
$w_{m}=2^{a_{F}+\nu_{2}(m)}$ where $a_{F}:=(|\mu_{2^{\infty}}(F(\sqrt
{-1}))|)_{2}$ is the $2$-adic valuation and $2^{\nu_{2}(m)}$ is the
$2$-primary part of $m$. If $F=\mathbb{Q}(\zeta_{2^{b}}+\bar{\zeta}_{2^{b}})$,
then $a_{F}=b$; and when $F=\mathbb{Q}(\zeta_{r}+\bar{\zeta}_{r})$ (with $r$
odd)\textsf{\ }or $\mathbb{Q}(\sqrt{d})$ with $d>2$, then $a_{F}=2$. The
following theorem applies to these examples, where we write
$G_{\operatorname*{tor}}$ for the torsion subgroup of an abelian group $G$.
Its proof combines our results for $GW^{\left[  n\right]  }(\mathcal{O}%
_{F}^{\prime})$ with \cite[Theorem 0.2]{RW}.

\begin{theorem}
\label{thm:zetavalues} For every $2$-regular totally real abelian number field
$F$ with $r$ real embeddings, the Dedekind zeta-function of $F$ takes the
values
\[
\zeta_{F}(-1-4k)=\frac{\#GW_{8k+2}^{\left[  0\right]  }(\mathcal{O}%
_{F}^{\prime})}{2\#GW_{8k+3}^{\left[  0\right]  }(\mathcal{O}_{F}^{\prime}%
)}=2^{2r}\frac{\#GW_{8k+2}^{\left[  2\right]  }(\mathcal{O}_{F}^{\prime}%
)}{\#GW_{8k+3}^{\left[  2\right]  }(\mathcal{O}_{F}^{\prime})}=\frac{2^{r}%
}{w_{4k+2}}%
\]%
\[
\zeta_{F}(-3-4k)=2^{r}\frac{\#GW_{8k+6}^{\left[  0\right]  }(\mathcal{O}%
_{F}^{\prime})}{\#GW_{8k+7}^{\left[  0\right]  }(\mathcal{O}_{F}^{\prime}%
)}=2^{r}\frac{\#GW_{8k+6}^{\left[  2\right]  }(\mathcal{O}_{F}^{\prime
})_{\operatorname*{tor}}}{\#GW_{8k+7}^{\left[  2\right]  }(\mathcal{O}%
_{F}^{\prime})}=\frac{2^{r}}{w_{4k+4}}%
\]
up to odd multiples.\hfill$\Box\smallskip$
\end{theorem}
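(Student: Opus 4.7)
The plan is to combine the $2$-primary description of $GW^{[n]}(\mathcal{O}_{F}^{\prime})$ supplied by Theorem \ref{thm:ComputationOF} with the Rognes--Weibel Lichtenbaum formula \cite[Theorem 0.2]{RW}, which, up to odd multiples, identifies $\zeta_{F}(1-2m)$ (for $F$ a totally real $2$-regular abelian number field with $r$ real embeddings) as an explicit $2$-power ratio of orders of $K$-groups of $\mathcal{O}_{F}^{\prime}$. In particular, Rognes--Weibel directly delivers the last equalities $\zeta_{F}(-1-4k) = 2^{r}/w_{4k+2}$ and $\zeta_{F}(-3-4k) = 2^{r}/w_{4k+4}$, so the task reduces to identifying each of the first two GW-expressions in each displayed line of the theorem with this common value, up to odd multiples.

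For this, I would compare the hermitian cartesian square of Theorem \ref{thm:ComputationOF} with its underlying $K$-theoretic square (displayed in the paragraph immediately preceding Theorem \ref{thm:ComputationOF}), from which the Rognes--Weibel formula is derived. The two squares share the same left column ($GW^{[n]}$ resp.\ $K^{[n]}$ of $\mathcal{O}_{F}^{\prime}$ and $\mathbb{F}_{q}$) and differ only in their right column, where $K^{[n]}_{\mathrm{top}}$ is replaced by $GW^{[n]}_{\mathrm{top}}$. Passing to long exact Mayer--Vietoris sequences of $2$-adic completions and evaluating in the degrees $i \in \{8k+2, 8k+3\}$ and $i \in \{8k+6, 8k+7\}$, the ratio
\[
\frac{\#GW^{[n]}_{i}(\mathcal{O}_{F}^{\prime})}{\#GW^{[n]}_{i+1}(\mathcal{O}_{F}^{\prime})} \cdot \frac{\#K_{i+1}(\mathcal{O}_{F}^{\prime})}{\#K_{i}(\mathcal{O}_{F}^{\prime})}
\]
equals, up to odd multiples, a ratio of orders of Bott-periodic topological groups over $\mathbb{R}$ and $\mathbb{C}$. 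These topological orders are explicit from Karoubi's calculations recalled in the proof of Theorem \ref{thm:GWalgKOtop}, and in each of the four relevant degrees they yield precisely the numerical factors $1/2$ (for $n=0, i=8k+2$), $2^{2r}$ (for $n=2, i=8k+2$), $2^{r}$ (for $n=0, i=8k+6$), and the corresponding factor for $n=2, i=8k+6$ after passage to the torsion subgroup, which is forced by the appearance of a free topological summand in that specific degree.

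I expect the main obstacle to be the careful bookkeeping of orders through the two Mayer--Vietoris sequences, in particular verifying that in each of the relevant degrees the boundary maps behave compatibly so that the $2$-adic orders multiply cleanly. The $2$-regularity of $F$, together with Friedlander's computation of $GW^{[n]}(\mathbb{F}_{q})$ and the standard description of the topological corners, ensures that the sequences decompose into short exact sequences of finite abelian $2$-groups (respectively, after modding out the explicit free part in the torsion case); their alternating orders then multiply to match the Rognes--Weibel $K$-theoretic ratio up to exactly the predicted topological correction, completing the identification with $2^{r}/w_{4k+2}$ and $2^{r}/w_{4k+4}$.
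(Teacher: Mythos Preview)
Your proposal is correct and follows the same strategy the paper indicates: the paper's entire proof is the one-line remark preceding the statement, namely that the result follows by combining the computation of $GW^{[n]}(\mathcal{O}_F')$ afforded by Theorem~\ref{thm:ComputationOF} (originally carried out in \cite{BKO1}) with the Rognes--Weibel formula \cite[Theorem 0.2]{RW}. Your Mayer--Vietoris comparison of the two cartesian squares is a legitimate way to organise that combination; an equally direct route is simply to read off the $2$-adic orders of $GW_i^{[n]}(\mathcal{O}_F')$ from the explicit tables in \cite{BKO1} (obtained from the square together with Friedlander's and Bott's computations of the three known corners) and then match them against the $2$-powers $2^r/w_{2m}$ supplied by \cite{RW}.
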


\begin{center}
\bigskip

\bigskip

A. Jon Berrick

Department of Mathematics, National University of Singapore, Singapore

Yale-NUS College, Singapore

e-mails: berrick@math.nus.edu.sg\qquad jon.berrick@yale-nus.edu.sg

\bigskip

Max Karoubi

UFR de math\'{e}matiques, Universit\'{e} Diderot Paris 7, France

e.mail : max.karoubi@gmail.com

\bigskip

Marco Schlichting

Mathematics Institute, University of Warwick, Coventry. United Kingdom

email: m.schlichting@warwick.ac.uk

\bigskip

Paul Arne {\O }stv{\ae }r

Department of Mathematics, University of Oslo, Norway

e.mail: paularne@math.uio.no
\end{center}

\end{document}